\theoremstyle{plain}
\newtheorem{theorem}{Theorem}[section]
\newtheorem{lemma}[theorem]{Lemma}
\newtheorem{proposition}[theorem]{Proposition}
\newtheorem{corollary}[theorem]{Corollary}
\newtheorem{conjecture}[theorem]{Conjecture}
\theoremstyle{definition}
\newtheorem{remark}[theorem]{Remark}
\newtheorem{example}[theorem]{Example}
\theoremstyle{definition}
\def\fnum{equation}
\numberwithin{equation}{section}
\begin{document}
\title[Index One Minimal Surfaces in Spherical Space Forms]{ Index one minimal Surfaces in Spherical Space Forms}

\author{Celso Viana}
\address{University College London UCL\\Union Building, 25 Gordon Street, London WC1E 6BT}
\email{celso.viana.14@ucl.ac.uk}

\begin{abstract}
We prove that orientable index one minimal surfaces  in spherical space forms with large fundamental group have genus at most two. This confirms a conjecture of  R. Schoen  for an infinite class of $3$-manifolds.
\end{abstract}
\maketitle
\section{Introduction}

The Morse index is an important analytic quantity in the study of  minimal surfaces. Roughly speaking, it counts  the maximal number of directions a minimal  surface can be deformed in order to decrease  its area.  Under this  analytical point of view, the simplest minimal surfaces are those with small index, namely zero or one. Index zero minimal surfaces, also known as stable, are an well studied topic in Differential Geometry. Among classical results we mention the   Bernstein problem on the classification of complete minimal graphs in $\mathbb{R}^n$ and those connecting stable minimal surfaces
and the topology of manifolds admitting  positive scalar curvature metrics due to Schoen-Yau. The existence of     stable minimal surfaces depends on the geometry and topology of the ambient space and is in general  obtained via a minimization procedure. Such surfaces do not exist in manifolds with  positive Ricci curvature. 
Index one minimal surfaces, on the other hand, do exist is this setting  and are produced by the  one parameter min-max construction of Almgren-Pitts and Simon-Smith \cite{CL,KMN,MN,Pitts}. An important feature about these surfaces  is that they provide  optimal geometric Heegaard  splitting of closed $3$-manifolds.

A guiding principle in the theory asserts that in positively curved manifolds, the  index of a minimal hypersurface controls its topology and geometry. For instance, it is proved in \cite{CKM}  that  the set of  minimal surfaces  with bounded index in a closed $3$-manifold with positive scalar curvature cannot
contain sequences of surfaces with unbounded genus or area. More generally, it is conjectured  in \cite{M,N} that if $\Sigma$ is a minimal hypersurface in a closed manifold with positive Ricci curvature $M$, then $\text{index}(\Sigma)\geq C\,b_1(\Sigma)$, where $b_1(\Sigma)$ is the first Betti number of $\Sigma$
and $C$ is a constant which depends only on $M$.
Estimates of this type have been studied by many authors,  see  \cite{ACS,CM,Li,Ros}  and references therein for further discussion. 
These estimates  are, however, far from being optimal when the index is small in general. A related problem is to  describe the geometry and topology of the minimal surfaces with the smallest index.
In this direction, we mention the classical result that  flat planes and the catenoid are, respectively,  the only embedded minimal surfaces   with index zero and one in $\mathbb{R}^3$, see \cite{dCP,FCS,P,Ros,LR}. Similar classification has also been proved in other non-compact flat space forms, see \cite{Ritore}. In higher dimensions, we mention the works \cite{dCRR,Si} on the classification of  compact minimal hypersurfaces with index one in $\mathbb{RP}^{n}$ and $\mathbb{S}^{n}$, respectively.

Using test functions coming from  meromorphic maps and harmonic forms, 
Ros \cite{Ros} proved that  two sided index one minimal surfaces in 3-manifolds with non-negative Ricci curvature have genus $\leq$ 3.   This result is sharp as the P Schwarz's minimal  surface in $\mathbb{R}^3$ projects to a closed minimal surface  with genus three  and index one in  the  cubic $3$-torus \cite{Ross}. On the other hand, when the ambient space has positive Ricci curvature,  the right estimate is given by the following conjecture:

\begin{conjecture}[Schoen \cite{N}]\label{conjecture}
\textit{Let $M^3$ be a closed three manifold with positive Ricci curvature. If $\Sigma$ is an orientable  minimal surface with index one in
$M^3$, then $\text{genus}(\Sigma)\leq 2$.}
\end{conjecture}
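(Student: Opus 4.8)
The plan is to run a balanced test-function argument against the Jacobi operator, following the scheme of Ros and then pushing the conformal input until the estimate becomes sharp. Write the index form of $\Sigma$ as $Q(f,f)=\int_\Sigma |\nabla f|^2-(|A|^2+\text{Ric}(\nu,\nu))f^2\,dA$, where $A$ is the second fundamental form and $\nu$ a unit normal. Since $M$ has positive Ricci curvature every closed minimal surface is unstable, and the hypothesis $\text{index}(\Sigma)=1$ means that the operator $-\Delta-(|A|^2+\text{Ric}(\nu,\nu))$ has exactly one negative eigenvalue, with positive first eigenfunction $u_1$; consequently $Q(f,f)\ge 0$ for every $f\in L^2(\Sigma)$ with $f\perp u_1$. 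First I would manufacture such test functions from the conformal geometry of $\Sigma$: by Riemann--Roch there is a conformal branched cover $\Phi=(f_1,f_2,f_3)\colon\Sigma\to S^2\subset\mathbb{R}^3$ of some degree $d$, and by the Hersch--Li--Yau balancing lemma I may replace $\Phi$ by its composition with a conformal automorphism of $S^2$ so that each $f_i$ is $L^2$-orthogonal to $u_1$.

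Next I would feed these into the index form. Summing $Q(f_i,f_i)\ge 0$ over $i$ and using the pointwise identities $\sum_i f_i^2\equiv 1$ and $\int_\Sigma \sum_i|\nabla f_i|^2\,dA=8\pi d$ for a conformal map of degree $d$ produces the master inequality $\int_\Sigma(|A|^2+\text{Ric}(\nu,\nu))\,dA\le 8\pi d$. I would then convert the left-hand side into a genus term. The Gauss equation for a minimal surface gives $K=K_M(T_p\Sigma)-\tfrac12|A|^2$, and the three-dimensional identity $\text{Ric}(\nu,\nu)=\tfrac12\Scal_M-K_M(T_p\Sigma)$ gives $|A|^2+\text{Ric}(\nu,\nu)=\tfrac12|A|^2+\tfrac12\Scal_M-K$; integrating and applying Gauss--Bonnet, $\int_\Sigma K\,dA=2\pi\chi(\Sigma)=4\pi(1-\text{genus}(\Sigma))$, rewrites the master inequality as $4\pi(\text{genus}(\Sigma)-1)+\tfrac12\int_\Sigma|A|^2\,dA+\tfrac12\int_\Sigma\Scal_M\,dA\le 8\pi d$. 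Positivity of the Ricci curvature forces $\Scal_M>0$, so this already yields the strict inequality $\text{genus}(\Sigma)-1<2d$, and---crucially---it isolates the two nonnegative terms $\int|A|^2$ and $\int\Scal_M$ as exactly the slack one must spend to reach the sharp constant.

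The sphere-map bound by itself is insufficient, because the gonality $d$ grows with the genus, so the first real task is to trade the degree-dependent energy $8\pi d$ for test functions whose energy grows more slowly. For this I would replace, or supplement, the meromorphic function by the $2g$-dimensional space of holomorphic (harmonic) $1$-forms on $\Sigma$: these generate a large family of test functions, and since the forbidden direction $u_1$ spans only a line, a linear-algebra balancing lets me keep essentially all of them orthogonal to $u_1$. Summing the index inequality over a projected basis yields a bound on $\int_\Sigma(|A|^2+\text{Ric}(\nu,\nu))$ that, through the Gauss--Bonnet identity above, grows strictly slower in the genus than the term $4\pi(\text{genus}(\Sigma)-1)$; this is the mechanism that caps the genus by a constant and recovers Ros's \emph{genus}$\,\le 3$.

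The decisive and genuinely hard step is closing the gap between \emph{genus}$\,\le 3$ and the conjectured \emph{genus}$\,\le 2$, and I expect this to be the main obstacle. The improvement must come from using the strict positivity of the Ricci curvature quantitatively rather than merely the sign of $\Scal_M$: from $\text{Ric}\ge\kappa>0$ one extracts the a priori area bound $\Area(\Sigma)\le 8\pi d/\kappa$, and a Simons-type inequality for $\int_\Sigma|A|^2$ should let me retain, rather than discard, both slack terms and push the constant below $3$. The difficulty is that for an arbitrary positively curved $M$ the ambient term $\tfrac12\int_\Sigma\Scal_M$ is not controlled from below tightly enough to survive all the way to the threshold $2$, and ruling out the extremal genus-$3$ configuration appears to require additional ambient rigidity. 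This is precisely the point at which the estimate fails to close in full generality and where structural hypotheses on $M$ (such as a space-form condition) must be imported; honestly locating this gap is, I believe, the heart of the conjecture.
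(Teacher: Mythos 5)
You have not proved the statement, and you concede this yourself in your final paragraph; that concession is accurate, and it matches the status of the statement in the paper, where it appears as a \emph{conjecture} attributed to Schoen --- the paper does not prove it in general either. Your first three paragraphs are essentially a sketch of Ros's known argument: balancing a degree-$d$ conformal map $\Phi=(f_1,f_2,f_3):\Sigma\to S^2$ against the first eigenfunction, the energy identity $\int_\Sigma\sum_i|\nabla f_i|^2\,dA=8\pi d$, and the Gauss-equation rewriting, which yield $4\pi(g-1)+\tfrac12\int_\Sigma|A|^2+\tfrac12\int_\Sigma\Scal_M\,dA\leq 8\pi d$. Since Brill--Noether only gives $d\leq\lceil(g+3)/2\rceil$, this inequality by itself is vacuous, as you note; the harmonic $1$-form refinement you then invoke is precisely the content of Ros's theorem giving genus $\leq 3$ under $\text{Ric}\geq 0$, which the paper quotes as prior work rather than re-derives. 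The decisive step you propose --- retaining the slack terms via an area bound from $\text{Ric}\geq\kappa>0$ and a Simons-type estimate on $\int_\Sigma|A|^2$ --- has no known implementation, and there is a structural reason to doubt it: the master inequality is scale-invariant while $\kappa$ is not, the bound $3$ is sharp for $\text{Ric}\geq 0$ (the Schwarz P surface projects to a genus-three index-one surface in the cubic torus), and the extremal genus-three configuration survives every version of this computation. That missing step is not a technicality; it is the entire conjecture.

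What the paper actually proves is the special case of spherical space forms with $|\pi_1(M)|\geq p_0$ (its main theorem), and by a route disjoint from yours. It argues by contradiction with a sequence of genus-three index-one surfaces $\Sigma_{p_i}\subset\mathbb{S}^3/G_{p_i}$, $|G_{p_i}|\to\infty$, rescales by the injectivity radius, and passes to a Cheeger--Gromov limit in a noncompact flat $3$-manifold. Second-variation estimates enter only once, through a logarithmic cut-off argument (its Lemma 3.4) applied together with Ritor\'{e}'s theorem that a complete orientable index-one minimal surface properly embedded in a noncompact orientable flat $3$-manifold has total curvature in $(-8\pi,-2\pi]$; since Gauss--Bonnet forces the limit of a genus-three sequence to have total curvature at most $-8\pi$, the limit must be totally geodesic. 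Total geodesy is then contradicted by curvature estimates built on the Rolling Theorem (its Proposition 3.1), which converts small curvature into a large embedded tubular neighborhood, combined with a case analysis over the classification of finite subgroups of $SO(4)$ (Hopf-fiber geometry, a Poincar\'{e}--Hopf argument producing points where the fiber is orthogonal to $\Sigma_p$, and the lens-space versus prism-manifold dichotomy). In short, the ``structural hypotheses on $M$'' you correctly predicted would be needed are imported there as the group structure of $\mathbb{S}^3/G$ and exploited through blow-up and compactness, not by sharpening the global index-form inequality. Your proposal locates the obstruction honestly, but it neither closes it nor contains the mechanism the paper uses to circumvent it in the special case it settles.
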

 
The interest in this conjecture is in part motivated by its implications for the classification of closed $3$-manifolds. Namely, it is proved in \cite{KMN} that every closed $3$-manifold with positive Ricci curvature contains an  index one minimal surface realizing its Heegaard genus. If Conjecture \ref{conjecture} is true, then this Heegaard genus is at most two. Combining this result with the classification of genus two $3$-manifolds, one  recovers  the following classical result of  Hamilton: 
\begin{theorem}[Hamilton \cite{H}]
\textit{If $(M^3,g)$ is a three manifold with positive Ricci curvature, then  $M\cong \mathbb{S}^3/G$, where $G$ is a finite group of isometries acting freely on $(\mathbb{S}^3,g_0)$.}
\end{theorem}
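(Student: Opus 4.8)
The plan is to prove this through Hamilton's Ricci flow. Starting from the given metric $g = g(0)$, I would evolve it by the unnormalized flow $\partial_t g_{ij} = -2R_{ij}$. Because this system is only weakly parabolic, owing to its invariance under the diffeomorphism group, the first task is short-time existence and uniqueness. I would obtain this via DeTurck's trick: fix a background connection, break the gauge freedom to turn the equation into a strictly parabolic system, solve that by standard parabolic theory, and then pull back by the resulting time-dependent diffeomorphisms to recover a solution of the Ricci flow.

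The second step is to control the evolution of the curvature. In dimension three the full Riemann tensor is algebraically determined by the Ricci tensor, so the relevant object is the reaction–diffusion equation satisfied by the curvature operator. Using Uhlenbeck's trick, evolving an orthonormal frame so that the metric is held fixed, I would reduce the evolution of the eigenvalues $\lambda \geq \mu \geq \nu$ of the curvature operator to a heat equation with a quadratic reaction term. Applying Hamilton's maximum principle for tensors to the associated ODE system, I would then show that positive Ricci curvature is preserved along the flow, and that the minimum of the scalar curvature is nondecreasing.

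The heart of the argument, and the step I expect to be the main obstacle, is the \emph{pinching estimate}: one must show that as the scalar curvature concentrates, the manifold becomes increasingly round, i.e. the eigenvalues of the Ricci tensor approach a common value. Concretely, I would exhibit a family of closed convex sets in the space of curvature operators which are invariant under the associated ODE system and correctly matched to the parabolic scaling, and apply the maximum principle to deduce an estimate of the form $|\text{(traceless Ricci)}|^2 \leq C\,R^{2-\delta}$ for some $\delta>0$. The danger is that the eigenvalues might spread apart rather than coalesce as $R \to \infty$; ruling this out demands a careful choice of the pinching set together with the interior derivative estimates of Bernstein–Bando–Shi type for the curvature and its covariant derivatives.

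Finally, since $\min R$ is bounded below by a positive constant and must blow up in finite time $T$, the manifold shrinks to a point. I would rescale the flow to fix the volume (the normalized Ricci flow), use the pinching and gradient estimates to show the normalized flow exists for all time with uniformly controlled geometry, and then prove $C^\infty$-convergence of $g(t)$ to a metric of constant positive sectional curvature. A closed three manifold carrying a metric of constant positive sectional curvature has universal cover the round $\mathbb{S}^3$; hence $M \cong \mathbb{S}^3/G$, where $G = \pi_1(M)$ acts freely by isometries of $(\mathbb{S}^3, g_0)$, as claimed.
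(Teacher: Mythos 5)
Your outline is a faithful summary of Hamilton's original Ricci flow argument, and the paper itself offers no proof of this statement: it is quoted as a classical theorem with a citation to Hamilton's 1982 paper, so your proposal essentially reproduces the proof of the cited source. The steps you list (short-time existence, preservation of positive Ricci curvature by the tensor maximum principle, the pinching estimate $|\mathrm{Ric}-\tfrac{R}{3}g|^2\leq C R^{2-\delta}$, gradient estimates, and convergence of the normalized flow to a constant curvature metric) are exactly the skeleton of that proof; the only anachronisms are cosmetic, since Hamilton originally obtained short-time existence via the Nash--Moser inverse function theorem rather than DeTurck's trick, and used integral rather than Bernstein--Bando--Shi pointwise derivative estimates, both of which are now-standard simplifications of the same argument. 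It is worth noting that the paper's reason for displaying this theorem is a genuinely different (and conditional) route to the same conclusion: by Ketover--Marques--Neves, a closed $3$-manifold of positive Ricci curvature contains an index one minimal surface realizing its Heegaard genus, so Schoen's conjecture would bound the Heegaard genus by two, and the classification of such manifolds would then recover Hamilton's theorem without Ricci flow. That route buys a variational, minimal-surface-theoretic explanation of the sphere theorem, whereas your PDE approach is unconditional and is the one that actually establishes the result as stated.
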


 \begin{remark}
	With the exception of lens spaces, which has Heegaard genus one, any other spherical space form has Heegaard genus two \cite{Mo}.
\end{remark}

 The list of $3$-manifolds where the Conjecture \ref{conjecture}  is verified is  small.
In  the case of spherical space forms, the only  examples are the sphere $\mathbb{S}^3$, the projective space $\mathbb{RP}^3$, and the lens spaces $L(3,1)$ and $L(3,2)$ \cite{RR1,V}. The conjecture has also  been proved on  sufficiently pinched convex hypersurfaces in $\mathbb{R}^4$, see \cite[Section 5]{ACS}. 

 Our main result  confirms Schoen's Conjecture  in the class of spherical space forms with large fundamental group. 
\begin{theorem}\label{main theorem}
\textit{There exists an integer $p_0$ so that if $\Sigma$ is an orientable  index one minimal surface   embedded in a spherical space form $M^3$ with $|\pi_1(M^3)|\geq p_0$, then $\text{genus}(\Sigma)\leq 2$.}
\end{theorem}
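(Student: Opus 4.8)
The plan is to show that genus three is impossible once $|\pi_1(M)|$ is large, the bound $\text{genus}(\Sigma)\le 3$ being already furnished by Ros's theorem quoted above. So assume $\text{genus}(\Sigma)=3$. First I would record the two elementary consequences of minimality in the round metric: the Gauss equation $K_\Sigma = 1-\tfrac12|A|^2$ together with Gauss--Bonnet give $\int_\Sigma|A|^2 = 2\,\text{Area}(\Sigma)+8\pi(g-1)$, and the Jacobi operator is $L=\Delta+|A|^2+2$ since $\text{Ric}_M(\nu,\nu)=2$ for the constant-curvature-one metric.

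Next I would run Ros's balanced test-function argument to extract a sharp area bound. Let $\phi=(\phi_1,\phi_2,\phi_3):\Sigma\to \mathbb{S}^2\subset\mathbb{R}^3$ be a holomorphic map of degree $d$ equal to the gonality of $\Sigma$; it is conformal and harmonic with $\int_\Sigma|\nabla\phi|^2=8\pi d$ and $\sum_i\phi_i^2\equiv 1$. Since $\Sigma$ has index one, its first Jacobi eigenfunction $u_1$ is positive, so by Hersch's lemma a M\"obius reparametrisation of the target makes $\int_\Sigma\phi_i\,u_1=0$ for each $i$; then each $\phi_i$ is $L^2$-orthogonal to $u_1$ and hence $Q(\phi_i)\ge 0$. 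Summing, $0\le\sum_iQ(\phi_i)=8\pi d-\int_\Sigma(|A|^2+2)$, which combined with the Gauss--Bonnet identity yields
\[
\text{Area}(\Sigma)\ \le\ 2\pi\,(d-g+1).
\]
For $g=3$ the gonality is $d\le 3$, and $d=2$ (the hyperelliptic case) forces $\text{Area}(\Sigma)\le 0$; hence $\Sigma$ is non-hyperelliptic with $d=3$, giving $\text{Area}(\Sigma)\le 2\pi$ and, by the total-curvature identity, $16\pi\le\int_\Sigma|A|^2\le 20\pi$.

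The heart of the proof is a compactness-and-degeneration argument that uses $|G|$ large. Suppose, for contradiction, that there is a sequence $\Sigma_n\subset M_n=\mathbb{S}^3/G_n$ of orientable index-one minimal surfaces of genus three with $|G_n|\to\infty$. By the previous paragraph the $\Sigma_n$ have area $\le 2\pi$ and $\int|A|^2\le 20\pi$, while the ambient metrics all have constant curvature one; thus the family has uniformly bounded area and total curvature in a fixed local geometry. As $|G_n|\to\infty$ the space forms $M_n$ collapse with bounded curvature along their Seifert (Hopf) circle fibres, the deck groups $G_n$ converging to the Hopf circle action on $\mathbb{S}^3$. A subsequence of the $\Sigma_n$ then converges, smoothly away from finitely many curvature-concentration points, to a minimal surface invariant under the limiting circle action; every Hopf-invariant minimal surface in $\mathbb{S}^3$ is a Hopf torus over a geodesic of $\mathbb{S}^2$, i.e.\ a Clifford torus of genus one. (The possibility $\text{Area}(\Sigma_n)\to 0$ is excluded first: with $\int|A|^2\ge 16\pi$ fixed it would force a curvature blow-up whose rescaled limit is an index-$\le 1$ complete minimal surface in $\mathbb{R}^3$, hence a plane or catenoid, carrying no genus.)

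Finally I would convert the drop in genus into index. Since $\Sigma_n$ has genus three but the limit has genus one, at least two handles must pinch off through catenoidal necks as $n\to\infty$, and the bound $\int|A|^2\le 20\pi$ guarantees that this happens at only finitely many necks. By the Chodosh--Ketover--Maximo localisation underlying \cite{CKM}, each thin degenerating neck is modelled on a scaled catenoid and carries its own negative direction for $L$; these localised directions are mutually $L^2$-almost-orthogonal and almost-orthogonal to the global (constant-supported) negative direction present on any minimal surface in positive Ricci curvature. Hence for $n$ large $\text{index}(\Sigma_n)\ge 2$, contradicting $\text{index}(\Sigma_n)=1$. I expect the genuinely hard step to be this last one carried out rigorously in the collapsing family: one must upgrade the compactness of \cite{CKM}—stated for a fixed ambient—to the sequence of collapsing space forms, control the curvature-concentration set, and verify that each lost handle produces a bona fide negative eigenvalue of $L$ rather than a mere Jacobi field.
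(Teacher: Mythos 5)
Your opening steps are sound and are not in the paper: the Gauss--Bonnet identity $\int_\Sigma|A|^2=2\,\mathrm{Area}(\Sigma)+8\pi(g-1)$ together with the Hersch-balanced gonality map does give $\mathrm{Area}(\Sigma)\le 2\pi(d-g+1)$, hence non-hyperellipticity and $\mathrm{Area}(\Sigma)\le 2\pi$, $16\pi\le\int_\Sigma|A|^2\le 20\pi$ for an index-one genus-three surface. But the degeneration argument that follows has two gaps I believe are fatal as stated. First, the claimed convergence to a smooth Hopf-invariant minimal surface (a Clifford torus): the only objects living in the fixed ambient $\mathbb{S}^3$ are the $G_n$-invariant lifts $\hat\Sigma_n$, whose area and total curvature are $|G_n|$ times those of $\Sigma_n$ and hence unbounded, so there is no convergence ``smoothly away from finitely many points''; after renormalising one only obtains a stationary varifold invariant under the closure of the groups, and this limit can be singular --- the paper's own remark on desingularizing stationary varifolds exhibits min-max surfaces in $\mathbb{S}^3/(I^{*}\times\mathbb{Z}_m)$ whose lifts converge to $h^{-1}(\mathcal{T})$ for a geodesic \emph{net} $\mathcal{T}\subset\mathbb{S}^2$, not to a Clifford torus. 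Second, and more fundamentally, the neck/index-accumulation step fails because in a collapsing family the genus need not be lost through catenoidal necks at curvature-concentration points. The paper proves (Lemmas \ref{total curvature inequality} and \ref{bounded curvature}) that after rescaling by $\lambda_p\sim(\mathrm{inj}\,M_p)^{-1}$ the second fundamental form is \emph{uniformly bounded} and the Cheeger--Gromov limit of a genus-three index-one sequence is forced to be totally geodesic: all the topology escapes to infinity in the rescaled picture with no concentration whatsoever. With no necks there are no localised negative directions, the Chodosh--Ketover--Maximo mechanism produces no extra index, and the contradiction evaporates. (Your parenthetical exclusion of $\mathrm{Area}(\Sigma_n)\to 0$ has the same weakness: a catenoidal blow-up limit ``carrying no genus'' is not by itself a contradiction, since the blow-up sees only a small piece of $\Sigma_n$.)

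The missing idea is supplied by the paper's actual route: rescale so the injectivity radius is of order one, use the Rolling Theorem (Proposition \ref{Rolling theorem}) to get local area bounds and multiplicity-one smooth convergence, and obtain a complete index-$\le 1$ minimal surface $\Sigma_\infty$ properly embedded in a noncompact flat $3$-manifold $\mathbb{R}^3/G$. Ritor\'e's bound $-8\pi<\int_{\Sigma_\infty}K\le-2\pi$ for index one (Theorem \ref{curvature of index one minimal surface}) is then played against a logarithmic-cutoff estimate showing $\int_{\Sigma_\infty}K\le 4\pi(1-h)=-8\pi$ when $h=3$; this forces $\Sigma_\infty$ to be totally geodesic, and totally geodesic limits are excluded case-by-case using the classification of the groups $G_p$ (a closed Hopf fibre orthogonal to $\Sigma_p$ forces $\sup|A_p|\gtrsim\lambda_p$ via the Rolling Theorem, lens spaces are handled through the foliation by Clifford tori, etc.). Your Hersch area bound is compatible with, but does not substitute for, this totally-geodesic dichotomy and the group-theoretic case analysis; without them the argument does not close.
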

\begin{remark}
The orientability assumption seems to be necessary in Theorem \ref{main theorem}.
It is pointed out in \cite{Ros,Ross1} that for every integer $n$, there are  lens spaces containing nonorientable area minimizing  surfaces with genus greater than $n$.
\end{remark}

Let $M$ be a spherical space form and $\mathcal{O}_M$ the set of closed orientable minimal surfaces embedded $M$. Define $\mathcal{A}_M=\{|\Sigma|:\Sigma \in \mathcal{O}_M\}$. By standard compactness theorems \cite{CS,Sharp}, we have that $\mathcal{A}_M=|\Sigma|$, where $\Sigma\in \mathcal{O}_M$. Moreover, the work of Mazet-Rosenberg \cite{MaR} and Ketover-Marques-Neves \cite{KMN} imply that $\Sigma$ has index one. From the proof of the Willmore conjecture \cite{M} we know  that any orientable minimal surface in the lens space $L(p,q)$ has area bigger than the  Clifford torus. In general, the least area minimal surface might have genus bigger than the Heegaard genus of the $3$-manifold. In the Berger spheres with small  Hopf fibers, the least area minimal surface  has genus one and is congruent to the Clifford torus.  For the class of  spherical space forms  in Theorem \ref{main theorem}, we have:

\begin{corollary}\label{least area surface}
Let $M$ be a spherical space form with large non-abelian fundamental group. If $\Sigma \in \mathcal{O}_M$ satisfies $\mathcal{A}_M=|\Sigma|$, then $g(\Sigma)=2$.
\end{corollary}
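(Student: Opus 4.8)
The plan is to combine Theorem \ref{main theorem} with an elementary covering-space argument, using the two hypotheses of the corollary for different purposes: the largeness of $\pi_1(M)$ supplies the upper bound $g(\Sigma)\le 2$, while the non-abelianity supplies the lower bound $g(\Sigma)\ge 2$. First I would record two preliminary reductions. As explained in the discussion preceding the corollary, the work of Mazet--Rosenberg and Ketover--Marques--Neves guarantees that a least-area element $\Sigma\in\mathcal{O}_M$ realizing $\mathcal{A}_M=|\Sigma|$ has index one. Moreover $\Sigma$ is connected: since $M$ is a spherical space form it has positive Ricci curvature, so by Frankel's theorem two distinct connected components of a closed embedded minimal surface, being disjoint closed minimal surfaces, would have to intersect---a contradiction. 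With $\Sigma$ connected of index one, and with $|\pi_1(M^3)|\ge p_0$ (part of the ``large'' hypothesis), Theorem \ref{main theorem} applies verbatim and gives $g(\Sigma)\le 2$.

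For the lower bound I would pass to the universal cover $\pi\colon \mathbb{S}^3\to M\cong \mathbb{S}^3/G$, where $G=\pi_1(M)$. The preimage $\widetilde\Sigma=\pi^{-1}(\Sigma)$ is a closed embedded minimal surface in the round $\mathbb{S}^3$, and $\pi$ restricts to a covering $\widetilde\Sigma\to\Sigma$ on which $G$ acts freely with quotient $\Sigma$. The key point is that $\widetilde\Sigma$ is connected: distinct components would again be disjoint closed minimal surfaces in $\mathbb{S}^3$, which has positive Ricci curvature, so Frankel's theorem forbids them. Hence $\widetilde\Sigma\to\Sigma$ is a \emph{connected} regular $|G|$-sheeted cover with deck group $G$, which means $\pi_1(\Sigma)$ surjects onto $G$.

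Now the low-genus cases fall out. If $g(\Sigma)=0$ then $\Sigma\cong\mathbb{S}^2$ is simply connected, so it cannot surject onto the nontrivial group $G$ (equivalently, $\mathbb{S}^2$ admits no connected cover of degree $|G|\ge 2$); this is absurd. If $g(\Sigma)=1$ then $\Sigma$ is a torus with $\pi_1(\Sigma)\cong\mathbb{Z}^2$ abelian, so its quotient $G$ would be abelian, contradicting the non-abelian hypothesis. Therefore $g(\Sigma)\ge 2$, and together with the upper bound this forces $g(\Sigma)=2$.

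The genuine difficulty is entirely contained in the upper bound, i.e.\ in Theorem \ref{main theorem}, which I invoke as a black box; the lower bound is a soft topological consequence of the connectedness of $\widetilde\Sigma$ (via Frankel) together with the fact that the fundamental groups of the sphere and the torus are too small or too abelian to surject onto a nontrivial non-abelian $G$. The only place demanding care is the connectedness claim: I must make sure Frankel's theorem is applied to $\mathbb{S}^3$, whose positive Ricci curvature is what rules out two disjoint components of $\widetilde\Sigma$, and that the surfaces in question are genuinely closed and disjoint.
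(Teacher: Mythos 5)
Your argument is correct and is essentially the proof the paper intends (the corollary is stated without an explicit proof, but the surrounding discussion supplies exactly your ingredients): the least-area surface has index one by \cite{MaR,KMN}, so Theorem \ref{main theorem} gives $g(\Sigma)\le 2$, while Frankel's theorem---which the paper itself invokes elsewhere to rule out minimal spheres and tori in space forms with non-abelian fundamental group---gives $g(\Sigma)\ge 2$. Your covering-space unpacking of Frankel (connectedness of $\pi^{-1}(\Sigma)$ in $\mathbb{S}^3$ forcing $\pi_1(\Sigma)\twoheadrightarrow G$) is just the standard proof of the $\pi_1$-surjectivity form of that theorem, so there is no substantive difference in approach.
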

The proof of Theorem \ref{main theorem} is inspired by Ritor\'{e} and Ros' work on the compactness of the space of index one minimal surfaces  in flat three torus \cite{RR}. Among other results, they  proved that  the flat  three torus with small injective radius and unit volume do not contain orientable index one minimal surfaces. Following similar ideas, we show that any rescaled sequence of index one minimal surfaces with genus three in spherical space forms with large fundamental group converges to a totally geodesic surface in a flat $3$-manifold. We contradict this statement by showing that the curvature of such surfaces is large somewhere by  an application of the  Rolling Theorem.

\noindent\textit{Acknowledgements.}
I am grateful to my  advisor  Andr\'{e} Neves for his constant encouragement and support. 
This work was carried out while  visiting the University of Chicago and I thank the Mathematics department  for its hospitality.
I also thank Lucas Ambrozio and Ezequiel Barbosa for helpful conversations. 

This work was supported by the Engineering and
Physical Sciences Research Council [EP/L015234/1], and the EPSRC Centre for Doctoral
Training in Geometry and Number Theory (London School of Geometry and
Number Theory), University College London.
\section{Preliminaries}
\subsection{Morse index}  A  surface $\Sigma \subset (M^3,g)$
is called minimal when the trace of its second fundamental
form is identically zero. Equivalently, the first variation of its area
is zero for all variations  generated by flows of compact supported vector fields $X\in \mathcal{X}_0(M)$. If $\Sigma$ is two sided, then its  second variation formula  is given by:
\begin{equation}
I(f):= \frac{d^2}{dt^2}\bigg|_{t=0}\text{area}(\Sigma_t)= \int_{\Sigma} |\nabla f|^2 - (\text{Ric}(N,N)+ |A|^2)f^2\, d_{\Sigma},
\end{equation}
where $f=\langle X,N\rangle$ is the normal component of $X$, $\text{Ric}(\cdot,\cdot)$ is the Ricci curvature of $M$ , and $A$ is the second fundamental form of $\Sigma$.
The quantity $I(f)$ is called the  Morse index form of $\Sigma$ and is the quadratic form associated to the Jacobi operator \[
L= \Delta + (\text{Ric}(N,N)+ |A|^2).
\]
 The Morse index of $\Sigma$ is defined as the number of negative eigenvalues of $L$. 

\subsection{Spherical space forms}
We regard $\mathbb{S}^3$ as the unit quaternions, i.e., $(z,w)=z_1 + z_2\,i + (w_1+w_2\,i)\,j$ and $|z|^2+|w|^2=1$. 
Let $\phi: \mathbb{S}^3\times \mathbb{S}^3\rightarrow SO(4)$ be the homomorphism of groups which associate for each pair $(u_1,u_2)\in  \mathbb{S}^3\times \mathbb{S}^3$ the isometry $\phi(u_1,u_2)\in SO(4)$ given by $x\mapsto\phi(u_1,u_2)(x)=u_1xu_2^{-1}$. The map $\phi$ is  surjective and $\text{Ker}\phi=C=\{(\pm1,\pm 1)\}$ .
Similarly, one can construct the homomorphism  $\psi: \mathbb{S}^3\rightarrow SO(3)\subset SO(4)$ defined by $x\in\mathbb{S}^3 \mapsto \psi(u)(x)=uxu^{-1}$. This map  is also surjective and its kernel is $\{\pm 1\}$. It follows that there exists an unique homomorphism $\varphi: SO(4)\rightarrow SO(3)\times SO(3)$ such that $\varphi\circ \phi= \psi\times\psi$.

For each finite subgroup $G\subset SO(4)$ we associate $H=\varphi(G)\subset SO(3)\times SO(3)$. The projection of $H$ on each factor of $SO(3)\times SO(3)$ is denoted by $H_1$ and $H_2$  respectively. If  $G$ acts freely on $\mathbb{S}^3$, then  $H_1$ or $H_2$ must be cyclic \cite{Sc}. The pre-images in $\mathbb{S}^3$ of $H_1$ and $H_2$ via the homomorphism $\psi$ are denoted by $\widehat{H}_1$ and $\widehat{H}_2$ respecively.
Since $H_1$ and $H_2$ are  finite subgroups of $SO(3)$, they must be isomorphic to   either the cyclic group, the dihedral group  $D_n$, the tetrahedral group $T$, the octahedral group $O$ or  the icosahedral group $I$. 

It is showed in \cite{S} that any finite subgroup $G\subset SO(4)$  is conjugated in $SO(4)$ to a finite subgroup of either $\phi(\mathbb{S}^1\times \mathbb{S}^3)$ or  $\phi(\mathbb{S}^3\times \mathbb{S}^1)$. Two  important remarks that we will use are the following:  $\phi(\mathbb{S}^1\times\mathbb{S}^3)$ preserves the Hopf fibers in $\mathbb{S}^3$ and  left multiplication by unit quaternions leaves the Hopf fibers invariant. Recall that the Hopf map $h:\mathbb{S}^3 \rightarrow \mathbb{S}^2(\frac{1}{2})$ sends
$(z,w)$ to $z/w$ where we think of $\mathbb{S}^2$ as $\mathbb{C}\cup\{\infty\}$. In particular, up to conjugation in $O(4)$, we may assume that  $G$ is a subgroup of $\phi(\mathbb{S}^1\times\mathbb{S}^3)$.
The following describes the classical classification of $3$-dimensional spherical space forms:

\begin{theorem}\label{classification of elliptic manifolds}
	\textit{Let $G$ be a finite subgroup of $\phi(\mathbb{S}^1\times \mathbb{S}^3)$ acting freely on $\mathbb{S}^3$. Then one of the following holds:
		\begin{enumerate}
			\item $G$ is cyclic;
			\item $H_2$ is $T$, $O$, $I$, or $D_n$ and $H_1$ is cyclic of order coprime to the order of $H_2$. Moreover, $G=\phi(\widehat{H}_1\times \widehat{H}_2)$;
			\item $H_2=T$ and $H_1$ is cyclic. Moreover, $G$ is a subgroup of index three in $\phi(\widehat{H}_1\times \widehat{H}_2)$;
			\item $H_2=D_n$ and $H_1$ is cyclic. Moreover, $G$ is a subgroup of index two in $\phi(\widehat{H}_1\times \widehat{H}_2)$.
	\end{enumerate}}
\end{theorem}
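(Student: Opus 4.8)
The statement is the classical spherical space-form classification, and the plan is to pull $G$ back to the double cover $\mathbb{S}^3\times\mathbb{S}^3$ and analyze the resulting subgroup of $\mathbb{S}^1\times\mathbb{S}^3$ by combining Goursat's lemma with the fixed-point-free condition. First I would set $\tilde G:=\phi^{-1}(G)$. Since $\phi$ is surjective with kernel $C$ and $C\subset\mathbb{S}^1\times\mathbb{S}^3$, the group $\tilde G$ is a finite subgroup of $\mathbb{S}^1\times\mathbb{S}^3$ containing $C$ with $\phi(\tilde G)=G$. Let $p_1,p_2$ be the coordinate projections and $K_1=p_1(\tilde G)\subset\mathbb{S}^1$, $K_2=p_2(\tilde G)\subset\mathbb{S}^3$; then $\psi(K_i)=H_i$ and $\widehat H_i=\psi^{-1}(H_i)$. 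Because $K_1$ is a finite subgroup of the circle $\mathbb{S}^1$ it is cyclic, so $H_1=\psi(K_1)$ is a finite cyclic subgroup of $\psi(\mathbb{S}^1)=SO(2)$. This is exactly where the hypothesis $G\subset\phi(\mathbb{S}^1\times\mathbb{S}^3)$ enters, and it recovers the cyclicity of $H_1$ directly.

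Next I would view $\tilde G$ as a subdirect product of $K_1\times K_2$ and apply Goursat's lemma: there are normal subgroups $N_i\trianglelefteq K_i$ and an isomorphism $K_1/N_1\xrightarrow{\sim}K_2/N_2=:Q$ with $\tilde G=\{(a,b):aN_1\mapsto bN_2\}$, so that $[K_1\times K_2:\tilde G]=|Q|$. As a quotient of the cyclic group $K_1$, the group $Q$ is cyclic, hence also a cyclic quotient of $H_2$ up to the order-two ambiguity of $\psi$. Two constraints must then be tracked at once: $Q$ is a cyclic quotient of $H_2$, and the centrality requirement $C\subset\tilde G$ forces $(-1,-1)\in\tilde G$, i.e. the images of $-1$ in $K_1/N_1$ and $K_2/N_2$ must correspond under the Goursat isomorphism. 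The freeness criterion is the remaining ingredient: a non-identity $\phi(u_1,u_2)$ fixes $x$ iff $u_1x=xu_2$, i.e. $u_2=x^{-1}u_1x$, and such $x$ exists iff $u_1,u_2$ are conjugate in $\mathbb{S}^3$, which for unit quaternions means equal real parts. Thus $G$ acts freely iff
\[
\mathrm{Re}(u_1)\neq\mathrm{Re}(u_2)\quad\text{for every }(u_1,u_2)\in\tilde G\setminus C .
\]

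I would then run through $H_2\in\{C_n,D_n,T,O,I\}$ using the $SO(3)$ classification recalled above, reading off the candidate cyclic quotients $Q$ from the abelianizations: arbitrary cyclic for $C_n$, only $C_3$ for $T$, only $C_2$ for $D_n$ and $O$, and nothing nontrivial for the perfect group $I$. When $Q$ is trivial one has $\tilde G=\widehat H_1\times\widehat H_2$, and the real-part condition makes freeness equivalent to $\gcd(|H_1|,|H_2|)=1$; this produces the cyclic case (1) (an abelian free action, hence cyclic by Smith theory when $H_2$ is cyclic) and the coprime full-product case (2). When $Q=C_3$ and $H_2=T$, the map $\widehat H_2\to Q$ kills $-1$ (as $Q$ has no $2$-torsion), so $C\subset\tilde G$ is automatic and an index-three free extension survives: case (3). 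When $Q=C_2$ one must compare the real parts carried by the nontrivial coset of $\widehat H_2$ with the odd powers $\cos(k\pi/N)$ available on the circle factor; here the interaction with $C\subset\tilde G$ is decisive. For $H_2=O$ the element $-1$ lies in the kernel of $\widehat H_2\to Q$, forcing $-1\in N_1$ and an even parity on the $\mathbb{S}^1$-side that always yields a coincidence of real parts, so no free index-two extension exists. For $H_2=D_n$ the dicyclic structure does admit a $C_2$-identification compatible with both $C\subset\tilde G$ and the real-part condition, giving case (4).

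The main obstacle is precisely this last case analysis: disentangling which nontrivial Goursat quotients $Q$ are simultaneously compatible with $C\subset\tilde G$ and with the real-part freeness condition is exactly what separates $T$ and $D_n$ (which carry the index-three and index-two extensions) from $O$ and $I$ (which do not), and what pins down the coprimality hypothesis of case (2). Carrying it out rigorously requires an explicit inventory of the rotation angles, equivalently the real parts, of the elements in each binary polyhedral group $\widehat H_2$, matched against the values $\cos(k\pi/N)$ on the circle factor; the bookkeeping is elementary but genuinely delicate, and it is where the asymmetry between the four cases is forced.
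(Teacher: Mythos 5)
The paper offers no argument for this theorem: its ``proof'' is the single line ``See page 455 in \cite{S}'', i.e.\ the statement is quoted verbatim from the classical classification of fixed-point-free finite subgroups of $SO(4)$ (note the citation key appears to be a typo --- page 455 belongs to Scott's survey \cite{Sc}, not to Schoen's paper). So there is nothing internal to compare you against; what you have written is a plan for reproving the cited classification. Your strategy --- pull $G$ back to $\tilde G=\phi^{-1}(G)\subset\mathbb{S}^1\times\mathbb{S}^3$, apply Goursat's lemma to the subdirect product $\tilde G\le K_1\times K_2$, and filter the resulting fibre products through the criterion that $\phi(u_1,u_2)$ has a fixed point iff $\mathrm{Re}(u_1)=\mathrm{Re}(u_2)$ --- is exactly the standard route, and the general steps you state (cyclicity of $K_1$ and hence of $Q$, the conjugacy/real-part criterion, freeness of the full product being equivalent to $\gcd(|H_1|,|H_2|)=1$) are all correct.

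There is, however, a concrete gap sitting precisely inside the case analysis you defer as ``bookkeeping''. The Goursat quotient $Q$ is a cyclic quotient of $K_2=\widehat H_2$, the \emph{binary} polyhedral group, and your inventory of candidates, read off from the abelianizations of the $SO(3)$ groups, is wrong in the dihedral case: for $n$ odd the binary dihedral group $\widehat{D}_n$ of order $4n$ has abelianization $\mathbb{Z}_4$ (since $-1=a^n$ does not lie in the commutator subgroup $\langle a^2\rangle$), so $Q\cong\mathbb{Z}_4$ is an admissible Goursat datum that your list (``only $C_2$ for $D_n$'') omits; it is compatible with $C\subset\tilde G$, because $-1$ maps to the order-two element of $\mathbb{Z}_4$ on both factors, so it is not excluded by the centrality constraint and must be either shown not to act freely or shown to be conjugate to one of cases (1)--(4). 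This is exactly the situation your phrase ``up to the order-two ambiguity of $\psi$'' glosses over: quotients of $\widehat H_2$ need not factor through $H_2$. Two smaller instances of the same incompleteness: for $n$ even $\widehat{D}_n$ has three distinct index-two subgroups (one cyclic, two binary dihedral), giving three inequivalent fibre products over $C_2$ to test, not one; and the exclusion of $H_2=O$ is a two-step argument --- freeness of the sub-product $N_1\times\widehat T\subset\tilde G$ first forces $4\nmid|N_1|$, hence $i\in K_1\setminus N_1$, and only then does the order-four (real part zero) element of $\widehat O\setminus\widehat T$ produce the forbidden coincidence --- rather than the single parity observation you give. None of this looks unfixable, but since the enumeration of the pairs $(N_1,N_2,Q)$ against the real-part condition \emph{is} the content of the theorem, the proposal as written does not yet establish it.
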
  
\begin{proof}
	See page 455 in \cite{S}.
\end{proof}

The spherical space forms obtained when $G$ is cyclic are called  lens spaces. If $p$ and $q$ are relative primes, then we denote by $L(p,q)$ the lens space  defined  by the action of  $\mathbb{Z}_p$  on $\mathbb{S}^3$ as follows: given $m\in \mathbb{Z}_p$, we define  $m\cdot(z,w)= (e^{2\pi \frac{mq\,i}{p}}z,e^{2\pi \frac{m\,i}{p}}w)$. 
The  Clifford torus $T_r\subset \mathbb{S}^3$, defined as
\begin{equation}\label{Clifford torus}
	T_r:=\mathbb{S}^1(\cos(r))\times\mathbb{S}^1(\sin(r))\subset \mathbb{S}^3
\end{equation}
where $r\in [0,\frac{\pi}{2}]$, is invariant by the group $\mathbb{Z}_p$ and the  projection of this family foliates $L(p,q)$ by tori of constant mean curvature. One can check that $T_{\frac{\pi}{4}}$  projects to an index one  minimal tori in $L(p,q)$ for every $p\geq 2$ and $q\geq 1$.

\begin{example}[Immersed index one minimal tori]
	Let $T$ be a Clifford torus in $\mathbb{S}^3$  containing the geodesics $T_0$ and $T_{\frac{\pi}{2}}$. For each $p$, let  $\mathcal{V}_p$ be the varifold defined by $\mathcal{V}_p= \cup_{ g\in \mathbb{Z}_p} g\cdot T$, where $\mathbb{Z}_p$ is the group defined above. The projection of $\mathcal{V}_p$ in $L(p,q)$ is a  minimal immersed  torus which fails to be embedded at the critical fibers $T_0$ and $T_{\frac{\pi}{2}}$ when $q\neq 1$. If $p$ is even, then  $\text{Index}(\mathcal{V}_p/\mathbb{Z}_p)=1$. Moreover, if $p,q$ are chosen so that $\lim_{p \rightarrow \infty} \text{diam}(T_{\frac{\pi}{4}}/ \mathbb{Z}_p)=0$, then the varifold $\mathcal{V}=\lim_{p\rightarrow \infty}\mathcal{V}_p$  is the foliation of $\mathbb{S}^3$ where the leaves are Clifford torus containing $T_0$ and $T_{\frac{\pi}{2}}$.
\end{example}

\begin{remark}[Doubling the Clifford torus]
If the group $G$ satisfies   item (4) in Theorem \ref{classification of elliptic manifolds}, i.e., $H_2=D_n$, then we call $\mathbb{S}^3/G$ a Prism manifold. These spherical space forms are double covered by  lens spaces.  In particular, one can  sweep-out  each Prism manifold with surfaces whose area does not exceed twice the volume of $\mathbb{S}^3/G$, see \cite{KMN}. Applying the min-max theory, one obtains an orientable index one minimal surface  with area bounded as above. If the order of $G$ is sufficiently large, then  Theorem \ref{main theorem} implies that the genus of these min-max surfaces is two. We remark that these manifolds do not contain minimal tori by Frankel's Theorem \cite{F}. One can visualize these surfaces better when the Prism manifold they live have a double cover $L(p,q)$ which satisfies  $\lim_{p \rightarrow \infty} \text{diam}(T_{\frac{\pi}{4}}/ \mathbb{Z}_p)=0$.  In this case, the orbit of a point $x\in T_r$ with respect to $G_p$ is becoming dense in the Clifford torus $T_r$. For every $p$, let $\hat{\Sigma}_p$ be the pre-image of these index one minimal surfaces in $\mathbb{S}^3$. By Frankel's Theorem $\hat{\Sigma}_p\cap T_{\frac{\pi}{4}}\neq \emptyset$ for every $p$; hence, $\hat{\Sigma}_p$ converges as varifolds  to the  Clifford torus $T_{\frac{\pi}{4}}$ with multiplicity two. The surface $\hat{\Sigma}_p$ pictures like a  doubling of the minimal Clifford torus.
\end{remark}

\begin{remark}[Desingularizing stationary varifolds]
	Another family of spherical space forms  is given by the quotients
	$\mathbb{S}^3/(I^{*}\times \mathbb{Z}_m)$, where $m$ satisfies $(m,30)=1$. By Frankel's Theorem, there are no minimal spheres or minimal tori in $\mathbb{S}^3/(I^{*}\times \mathbb{Z}_m)$. With the help of the Hopf fibration $h:\mathbb{S}^3\rightarrow \mathbb{S}^2$, it is possible to construct a sweep-out  of $\mathbb{S}^3$  which is invariant by $I^{*}\times \mathbb{Z}_m$ and that  projects to a sweep-out in $\mathbb{S}^3/(I^{*}\times \mathbb{Z}_m)$ by surfaces with genus two and area bounded from above by $\frac{C}{m}$, see \cite[Section 6]{K}. Applying the min-max theory, one obtains an index one minimal surface  $\Sigma_m$ with genus two in $\mathbb{S}^3/(I^{*}\times \mathbb{Z}_m)$ and    area satisfying $|\Sigma_m|\leq \frac{C}{m}$. Its pre-image   $\hat{\Sigma}_m\subset \mathbb{S}^3$  has uniform bounded area and converges, as $m\rightarrow\infty$, to a stationary varifold $\mathcal{V}$ which is invariant by the Hopf fibration. In particular, $\mathcal{V}=h^{-1}(\mathcal{T})$, where   $\mathcal{T}$ is a $I^{*}$ invariant geodesic net in $\mathbb{S}^2$. By Allard's Regularity Theorem,
	 the genus of $\hat{\Sigma}_m$ is concentrated near $h^{-1}(V)$, where $V$ is the set of vertices of $\mathcal{T}$. The surface $\hat{\Sigma}_m$ pictures like a desingularization of $h^{-1}(\mathcal{T})$ near $h^{-1}(V)$ through Scherk towers.
\end{remark}

\subsection{Non compact flat space forms}
Every non-compact orientable flat space form  is the quotient of $\mathbb{R}^3$ by a discrete subgroup $G$
of the group $\text{Iso}(\mathbb{R}^3)$ of affine orientation preserving isometries acting properly and  discontinuously   in $\mathbb{R}^3$.  For every subgroup $G$ we denote by $\Gamma(G)$ the subgroup of translations in $G$.  The following describe all the possible types of affine diffeomorphic complete non compact orientable flat three manifolds (see \cite{W} for a comprehensive discussion):

If $\text{rank}(\Gamma(G)) = 0$ or $1$, then either $G =\{\text{Id}\}$ or $G= S_{\theta}$, with $0\leq\theta\leq \pi$, where $S_{\theta}$ is the subgroup generated by a screw motion given by a rotation of angle $\theta$
followed by a non trivial translation in the direction of the rotation axis.

If $\text{rank}(\Gamma(G)) =2$, then  either $G$ is
generated by two linearly independent translations and $\mathbb{R}^3/G$ is the
Riemannian product $T^2\times\mathbb{R}$, where $T^2$ is a flat torus, or $G$ is
generated by a screw motion with angle $\pi$ and a translation orthogonal to the axis
of the screw motion.

\begin{theorem}[Ritor\'{e} \cite{Ritore}]\label{curvature of index one minimal surface}
	\textit{If  $\Sigma$ is a complete orientable index one minimal surface  properly embedded in a non-compact orientable flat  $3$-manifold $\mathbb{R}^3/G$ , then \[-8\pi<\int_{\Sigma}K_{\Sigma}\, d_{\Sigma}\leq-2\pi.\]}  
\end{theorem}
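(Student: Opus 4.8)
The plan is to reduce both total-curvature bounds to a spectral problem for the Gauss map and then to apply sharp stability estimates of Barbosa--do Carmo type.

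First I would record the consequences of flatness. Since $M=\mathbb{R}^3/G$ is flat we have $\text{Ric}\equiv 0$, so the Jacobi operator is $L=\Delta+|A|^2$ and the Gauss equation gives $K_\Sigma=\det A=-\tfrac12|A|^2\le 0$. In particular $\int_\Sigma K_\Sigma\le 0$, and the index form reads
\begin{equation*}
I(f)=\int_\Sigma |\nabla f|^2-|A|^2 f^2\,d_\Sigma=\int_\Sigma|\nabla f|^2+2K_\Sigma f^2\,d_\Sigma .
\end{equation*}
Next, because $\Sigma$ has finite index (index one), a Fischer--Colbrie type argument adapted to the flat quotient shows that $\Sigma$ has finite total curvature; hence $\int_\Sigma K_\Sigma$ is finite, $\Sigma$ is conformally a compact surface with finitely many punctures, and the Gauss map $N\colon\Sigma\to S^2$ (well defined since $\Sigma$ is two-sided and the unit normal descends from $\mathbb{R}^3$) extends conformally across the ends.

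The key reduction is to the Gauss map. For a minimal surface the differential of $N$ is the shape operator, so $N^*g_{S^2}=-K_\Sigma\,g_\Sigma=\tfrac12|A|^2 g_\Sigma$. Writing $\tilde g:=-K_\Sigma\,g_\Sigma$ for this pulled-back round metric and using the conformal invariance of the Dirichlet energy in dimension two, the index form becomes
\begin{equation*}
I(f)=\int_\Sigma|\nabla_{\tilde g}f|^2-2f^2\,dA_{\tilde g},
\end{equation*}
which is exactly the quadratic form of $\Delta_{\tilde g}+2$ on the branched spherical surface $(\Sigma,\tilde g)$, whose total area equals $\int_\Sigma(-K_\Sigma)\,d_\Sigma=\big|\int_\Sigma K_\Sigma\big|$, i.e. the area of the Gauss image counted with multiplicity. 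Thus $\text{index}(\Sigma)$ equals the number of negative eigenvalues of $\Delta+2$ on this cover of $S^2$. On the round unit sphere the eigenvalues of $-\Delta$ are $\ell(\ell+1)$, so $\Delta+2$ has index exactly one (the constants, $\ell=0$) with three-dimensional kernel (the coordinate functions, $\ell=1$); on $\Sigma$ the latter correspond to the three translation Jacobi fields $N_1,N_2,N_3\in\ker L$.

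Now I would deduce the two bounds. For the upper bound I would invoke the stability theorem of Barbosa--do Carmo: any compactly supported unstable direction is carried by a subdomain $\Omega\subset\Sigma$, and such $\Omega$ can fail to be stable only if its Gauss image has area at least that of a hemisphere, $\int_\Omega(-K_\Sigma)\ge 2\pi$. Since $\Sigma$ has index one it is unstable, so $\int_\Sigma(-K_\Sigma)\ge 2\pi$, that is $\int_\Sigma K_\Sigma\le -2\pi$. For the lower bound I would argue contrapositively: if $\int_\Sigma(-K_\Sigma)\ge 8\pi$ then, using the sharp comparison of the spectrum of $\Delta+2$ on branched covers of $S^2$ with that of the round sphere (in the spirit of Montiel--Ros and Ejiri), and using that the coordinate Jacobi fields exactly saturate the kernel on a single sphere, the operator $\Delta+2$ acquires a second negative eigenvalue, so $\text{index}(\Sigma)\ge 2$. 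Hence index one forces $\int_\Sigma(-K_\Sigma)<8\pi$, i.e. $\int_\Sigma K_\Sigma>-8\pi$.

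\textbf{Main obstacle.} The delicate point is the lower bound and the exact threshold $8\pi$. One must control how the ends (the punctures where $\tilde g$ degenerates and $K_\Sigma\to 0$) contribute to the eigenvalue count on the complete, non-compact, possibly branched surface $(\Sigma,\tilde g)$, and show that a second negative eigenvalue of $\Delta+2$ appears precisely once the spherical area reaches two full spheres. This requires the sharp spectral comparison with the round sphere, including careful bookkeeping of the three-dimensional kernel coming from the translation Jacobi fields, rather than a naive partition of the Gauss image into two pieces of area greater than $2\pi$, which would yield the wrong threshold $4\pi$ (the catenoid, with total curvature $-4\pi$, already shows index stays one past $4\pi$). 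A secondary technical issue is checking that the Gauss map, the finite-total-curvature structure, and the multiplicity count descend correctly to the quotient $\mathbb{R}^3/G$ in each of the flat model geometries listed above.
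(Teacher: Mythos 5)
First, note that the paper offers no proof of this statement: it is quoted from Ritor\'e's paper \cite{Ritore}, so the only ``proof'' in the source is the citation, and your attempt has to be measured against Ritor\'e's published argument. Your framework --- flatness gives $L=\Delta+|A|^2$, Fischer--Colbrie gives finite total curvature and conformal compactification, and the conformal change $\tilde g=-K_\Sigma\,g_\Sigma$ turns the index form into that of $\Delta_{\tilde g}+2$ on a spherical metric --- is indeed the one used in the literature (L\'opez--Ros, Montiel--Ros, Ritor\'e--Ros). Your upper bound is correct, and in fact follows even more cheaply from the quantization $\int_\Sigma K_\Sigma\in 2\pi\mathbb{Z}$ recorded in Remark \ref{multiple 2pi} together with the observation that $\int_\Sigma K_\Sigma=0$ would force $A\equiv 0$ and hence stability.

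The genuine gap is the lower bound, which is the entire content of the theorem and which you do not actually prove: you assert that once $\int_\Sigma(-K_\Sigma)\ge 8\pi$ the operator $\Delta_{\tilde g}+2$ acquires a second negative eigenvalue ``by a sharp spectral comparison in the spirit of Montiel--Ros and Ejiri,'' and you yourself flag this as the main obstacle. That threshold does not follow from any soft comparison. The natural first attempt, a Hersch/Yang--Yau bound $\lambda_1\cdot\mathrm{Area}\le 8\pi(\gamma+1)$ on the compactified surface, yields $\lambda_1<2$ (hence a second negative eigenvalue) only when $\mathrm{Area}>4\pi(\gamma+1)$, which at area exactly $8\pi$ handles only genus zero; the remaining cases require the finer Montiel--Ros analysis of Schr\"odinger operators attached to the extended Gauss map together with the Jorge--Meeks/Meeks--Rosenberg relations between degree, genus and the structure of the ends in the quotient. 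Relatedly, what you dismiss as a ``secondary technical issue'' --- that for $G$ containing a screw motion the unit normal is only equivariant, so the Gauss map does not descend to a map $\Sigma\to S^2$ and its degree must be extracted from the helicoidal or Scherk-type ends --- is precisely where the bookkeeping for the $8\pi$ threshold lives: the metric $\tilde g=-K_\Sigma g_\Sigma$ is well defined on the quotient, but the identification of its total area with $4\pi$ times an integer degree is not automatic there. As it stands, the proposal is a correct outline of the known strategy with the decisive step left unproved.
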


\begin{remark}\label{multiple 2pi}
	By \cite{MR,MR1}, the total curvature of a properly embedded minimal surface in $\mathbb{R}^3/G$ 
	is a multiple of $2\pi$ if finite.
\end{remark}
\begin{example}[Index one Helicoids  with total curvature $-2\pi$] Let $\Sigma$ the helicoid in $\mathbb{R}^3$ parametrized by  $X(u,v)=(u\cos(v),u\sin(v), v)$. One can check that 
	\begin{eqnarray}\label{curvature helicoid}
	\int_{\Sigma\cap\{0\leq v\leq 4\pi\}}K_{\Sigma} d_{\Sigma}= -4\pi.
	\end{eqnarray}
	Now consider $\Sigma/\mathbb{Z}_{4\pi}$   in $\mathbb{R}^3/\mathbb{Z}_{4\pi}$, where $\mathbb{Z}_{4\pi}$ is the group of vertical translations by multiples of $4\pi$. Recall that the Gauss map $N:\Sigma/\mathbb{Z}_{4\pi}\rightarrow \mathbb{S}^2$ is conformal and with degree one by (\ref{curvature helicoid}). A standard argument implies that $\text{ind}(L_{\Sigma/\mathbb{Z}_{4\pi}})=\text{ind}(L_0)$, where $L_{\Sigma}= \Delta + |\nabla N|^2$ is the Jacobi operator of $\Sigma$ and $L_0$ is the operator $L_0= \Delta+ 2$ on  $\mathbb{S}^2$. Hence, $\text{ind}(\Sigma/\mathbb{Z}_{4\pi})=1$. Let $S_{\pi}$ be the subgroup of isometries generated by the screw motion $R(x,y,z)= (-x,-y, z+ 2\pi)$.  Using that $S_{\pi}$ is a subgroup of order two in $\Sigma/\mathbb{Z}_{4\pi}$, we conclude that  $\Sigma/S_{\pi}$ is a minimal surface with index one and total curvature $-2\pi$ in $\mathbb{R}^3/S_{\pi}$.
\end{example}
\begin{remark}
	It is an open question weather there exists an index one minimal surface $\Sigma$ in $\mathbb{R}^3/G$ such that $\int_{\Sigma}K_{\Sigma}\,d_{\Sigma}=-6\pi$. 
	If $\Sigma$ is an index one minimal surface in a non-compact flat $3$-manifold  $\mathbb{R}^3/G$ where $G$ contains only translations, then $\int_{\Sigma}K_{\Sigma}\,d_{\Sigma}=-4\pi$ \cite{RR}.
\end{remark}
\begin{example}
	Let us show that  $\mathbb{R}^3/S_{\frac{2\pi}{l}}$ can be obtained as a limit of Lens spaces under  the Cheeger-Gromov convergence. To see this, consider the sequence of Lens spaces $(L(p_k,k), p_k^2\,g_0,x_k)$, where   $x_k$ lies on the critical fiber $T_{\frac{\pi}{2}}$ and $p_k=l(k-1)$. This sequence has curvature close to zero and injective radius at $x_k$ bounded from below by $\pi$. We claim that
	\[
	(L(p_k,k), p_k^2\,g_0,x_k)\xrightarrow{C-G}(\mathbb{R}^3/S_{\frac{2\pi}{l}}, \delta, x_{\infty}).
	\]
	The observation is that the critical fiber $T_{\frac{\pi}{2}}$ has length $2\pi$ whereas the nearby Hopf fibers are equidistant  and  have length $2\pi\,l$.
\end{example}

\section{Proof of Theorem \ref{main theorem}}
\begin{proposition}\label{Rolling theorem}
\textit{Let $\Sigma$ be a  closed minimal surface in $\mathbb{S}^3$ and $N: \Sigma_p \rightarrow \mathbb{S}^3$ be the unit normal vector field of $\Sigma$. If we denote by
\begin{eqnarray*}c=c(\Sigma)=\min\bigg\{ \arctan\bigg(\frac{1}{\max\{\lambda_2(x): x\in \Sigma\}}\bigg),\frac{\pi}{4}\bigg\},
	\end{eqnarray*}
where $\lambda_2(x)$ is the non-negative principal curvature at $x$, and  by $F:\Sigma\times [0,c)\rightarrow \mathbb{S}^3$ the exponential map on $\Sigma$, which is given by
\[(x,t)\mapsto F(x,t)=\cos(t)x+ \sin(t)N(x),\]
then $F$ is a diffeomorphism onto its image.}
\end{proposition}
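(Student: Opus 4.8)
The plan is to obtain the two properties that together yield a diffeomorphism onto the image: that $F$ is an immersion (equivalently a local diffeomorphism, since $\Sigma\times[0,c)$ and $\mathbb{S}^3$ are both three–dimensional), and that $F$ is injective. Since $\Sigma$ is compact, each slice $\Sigma\times[0,\tau]$ is compact, so an injective immersion $F$ is automatically a homeomorphism onto its image and hence a diffeomorphism onto the (open) image.

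For the immersion statement I would differentiate $F(x,t)=\cos(t)x+\sin(t)N(x)$ directly in $\mathbb{R}^4\supset\mathbb{S}^3$. In the $t$–direction one gets the unit geodesic velocity $-\sin(t)x+\cos(t)N(x)$, orthogonal to $\Sigma$ and to every tangential variation; in a principal direction $e_i$ (with $D_{e_i}N=-\lambda_i e_i$ and, by minimality, $\lambda_1=-\lambda_2\le 0\le\lambda_2$) one gets $(\cos t-\lambda_i\sin t)\,e_i$. Hence the Jacobian factor is
\[
(\cos t-\lambda_1\sin t)(\cos t-\lambda_2\sin t)=\cos^2 t-\lambda_2^2\sin^2 t,
\]
which is positive exactly when $\tan t<1/\lambda_2$, i.e. when $t<\arctan(1/\lambda_2(x))$. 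Since $c\le\arctan\big(1/\max_x\lambda_2\big)$, this holds for all $t<c$ and all $x$, so $F$ is an immersion on $\Sigma\times[0,c)$; the extra cut–off at $\pi/4$ will be used below to control a chord length rather than here.

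For injectivity I would grow the tube radius. Let $\tau^\ast$ be the supremum of those $\tau\in[0,c)$ for which $F|_{\Sigma\times[0,\tau]}$ is an embedding; this set is nonempty (the usual one–sided tubular neighborhood for small $\tau$) and open by compactness, so it suffices to exclude $\tau^\ast<c$. If $\tau^\ast<c$, then the embedding first fails through a self–tangency: there are $(x_1,t_1)\ne(x_2,t_2)$ with $F(x_1,t_1)=F(x_2,t_2)=p$ at which the two tube leaves, pieces of the parallel surfaces $\Sigma_{t_1}$ and $\Sigma_{t_2}$, are tangent with locally disjoint interiors. Tangency of two leaves of a normal tube forces $\gamma_{x_1}$ and $\gamma_{x_2}$ to lie on one common great circle through $p$ orthogonal to the common tangent plane $\Pi$. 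If the two unit normals at $p$ coincide, then $x_1,x_2$ lie on the same side of $p$ at distinct parameters, so one of them lies on the normal geodesic of the other at parameter strictly less than $\tau^\ast$; this is an embedded self–intersection below radius $\tau^\ast$, contradicting the definition of $\tau^\ast$.

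The remaining and main difficulty is the head–on case, where the two normals at $p$ are opposite, $x_1,x_2$ lie on opposite sides of $p$, and the open tube occupies both sides of $\Pi$. Here I would use the evolution of principal curvatures along the normal geodesic: writing $\theta=\arctan(1/\lambda_2)$, the principal curvatures $\cot\theta=\lambda_2$ and $\cot(\pi-\theta)=-\lambda_2$ of $\Sigma$ become $\cot(\theta+t)$ and $\cot(\pi-\theta+t)$ on $\Sigma_t$, so the first focal point is at $t=\theta\ge c$ and, for $0<t_i<c$, the second of these is negative because its argument lies in $(\pi/2,\pi)$. Thus each leaf is saddle–shaped at $p$ and is not locally convex toward the far side. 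The plan is to combine this sign information with the requirement that the two tube interiors be disjoint near $p$ and with the bound $t_1+t_2<2c\le\pi/2$ (from the $\pi/4$ cut–off, which keeps the chord $x_1x_2$ short and minimizing) to show that the two tangent saddle leaves must in fact cross at some radius below $\tau^\ast$, again contradicting the definition of $\tau^\ast$. Making this comparison of the two tangent leaves precise — i.e. showing that the very curvature bound guaranteeing the immersion property also forbids a head–on self–tangency within distance $c$ — is the heart of the Rolling Theorem and the step I expect to require the most care.
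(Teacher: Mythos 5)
Your immersion computation and the ``grow the tube until it first touches itself'' scheme coincide with the paper's proof, and your disposal of the same-normal tangency is fine (the paper instead first uses that $\Sigma$ separates $\mathbb{S}^3$ to force $t_1=t_2=\tau^\ast$, which slightly streamlines both cases). But the head-on case, which you correctly identify as the heart of the matter, is left as a plan rather than proved, and the plan as stated does not close: knowing only that each leaf of $\Sigma_{t}$ is saddle-shaped (one principal curvature of each sign) cannot force two tangent, oppositely-oriented leaves to cross --- two tangent saddles can perfectly well be locally ordered near the tangency point (e.g.\ $z=x^2-y^2$ and $z=2x^2-\tfrac12 y^2$), so ``sign information plus disjoint interiors plus a short chord'' is not enough to derive a contradiction.

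The missing ingredient is the scalar invariant that the maximum principle actually sees: the \emph{mean} curvature of the parallel surface. From your own formula for the principal curvatures one gets
\[
2H_t=\tan\bigl(t+\arctan\lambda_2\bigr)+\tan\bigl(t-\arctan\lambda_2\bigr)=\frac{(1+\lambda_2^2)\sin(2t)}{\cos^2(t)-\lambda_2^2\sin^2(t)}>0
\]
for all $0<t<c$, i.e.\ every $\Sigma_t$ is strictly mean-convex toward the outgoing normal $N_t$. At a head-on tangency the two leaves are tangent and locally ordered (since the swept regions were disjoint up to radius $\tau^\ast$), and with respect to a \emph{common} choice of unit normal their mean curvatures are $+H_{t_0}>0$ and $-H_{t_0}<0$, with the signs arranged so that the strong maximum principle for the mean curvature operator (Schoen's Lemma~1 in \cite{S}) forces the two leaves to coincide with $H_{t_0}=0$ --- contradicting $H_{t_0}>0$. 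This is exactly how the paper finishes; without this mean curvature comparison your argument has a genuine gap at its central step.
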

\begin{proof}
We may assume that $g(\Sigma)\geq 1$ since a minimal sphere in $\mathbb{S}^3$ is an equator and the Proposition trivially holds.

Let $\{e_1,e_2\}$ be an orthonormal basis with  eigenvectors of the second fundamental form $A_{\Sigma}$ and $\{\lambda_1,\lambda_2\}$ the respective eigenvalues. It follows that $dF(e_i)=(\cos(t)-\sin(t)\lambda_i)e_i$ and $dF(\partial t)=-\sin(t)x+ \cos(t)N(x)$. Since $\tan(t)\leq 1/\max_{\Sigma}\{\lambda_2\}$ for every  $t\in(0,c)$, we conclude that $F$ is a local diffeomorphism. The unit normal vector field along $\Sigma_t=F(\Sigma,t)$ is $N_t=-\sin(t)x+ \cos(t)N(x)$. Moreover, if we denote the mean curvature of $\Sigma_t=$ by $H_t$, then 
\[H_t=\frac{1}{2} \frac{(1+\lambda_2^2)\sin(2t)}{(\cos^2(t)-\sin^2(t)\lambda_2^2)}> 0,\]
for every $t\in(0,c)$.
Let $t_0=\sup\{t>0: F:\Sigma\times [0,t]\rightarrow \mathbb{S}^3\, \text{is injective}\}$. If $t_0<c$, then there exist $(x_1,t_1)$ and $(x_2,t_2)$ in $\Sigma\times[0,t_0]$ with the same image under $F$. Since $\Sigma$ separates $\mathbb{S}^3$,  these points must lie on $\Sigma\times\{t_0\}$. Hence, we may assume that $F(x_1,t_0)=F(x_2,t_0)$ and that $x_1\neq x_2$. Since $\Sigma_{t_0}$ has a  tangential self intersection at $F(x_1,t_0)$, we conclude that $N_{t_0}(x_1)=\pm N_{t_0}(x_2)$. If $N_{t_0}(x_1)= N_{t_0}(x_2)$, then $t_0= \frac{\pi}{4}$, contradiction. Consequently, $N_{t_0}(x_1)=-N_{t_0}(x_2)$ since $x_1\neq x_2$. Hence,  $\Sigma_{t_0}$ is locally at $F(x_1,t_0)$, an union of two tangential surfaces $\Gamma_1$ and $\Gamma_2$ with $\Gamma_1\leq \Gamma_2$. Moreover, the mean curvatures in the $N_{t_0}(x_1)$ direction say satisfies $H_{\Gamma_1}\leq 0 \leq H_{\Gamma_1}$. Applying the Maximum Principle  \cite[Lemma 1]{S}, we conclude 
the existence of neighborhoods of $x_1$ and $x_2$ in $\Sigma$ with the same image under $F_{t_0}$ and $H_{t_0}=0$ there. This is a contradiction and the result follows.
\end{proof}

\begin{lemma}\label{local area bounds}
\textit{Let $\Sigma$ be a orientable  minimal surface embedded in $\mathbb{S}^3$. If $R< c(\Sigma)$, then there exists $C>0$ independent of $\Sigma$ such that  $\text{vol}(B_{2R}(x))\geq C\,R\cdot\text{area}(\Sigma\cap B_R(x))$ for every $x\in \Sigma$.}
\end{lemma}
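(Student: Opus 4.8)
The plan is to use the Rolling Theorem (Proposition \ref{Rolling theorem}) to push the piece $\Sigma\cap B_R(x)$ off itself along the normal geodesic flow, producing a solid tube of definite volume sitting inside $B_{2R}(x)$. Concretely, since $R<c(\Sigma)$, the exponential map $F(y,t)=\cos(t)y+\sin(t)N(y)$ restricts to a diffeomorphism from $(\Sigma\cap B_R(x))\times[0,R]$ onto its image $\Omega:=F\big((\Sigma\cap B_R(x))\times[0,R]\big)\subset\mathbb{S}^3$. The estimate will then follow from the change-of-variables formula
\[
\text{vol}(\Omega)=\int_{\Sigma\cap B_R(x)}\int_0^R J(y,t)\,dt\,d_\Sigma(y),
\]
where $J$ is the Jacobian of $F$, once I verify that $\Omega\subset B_{2R}(x)$ and that the inner integral is bounded below by a universal multiple of $R$.

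The containment is the easy step: for fixed $y$ the curve $t\mapsto F(y,t)$ is a unit-speed geodesic of $\mathbb{S}^3$, so $\text{dist}_{\mathbb{S}^3}(y,F(y,t))=t\le R$; combined with $\text{dist}_{\mathbb{S}^3}(x,y)<R$, the triangle inequality gives $F(y,t)\in B_{2R}(x)$ for all admissible $(y,t)$. For the Jacobian I would reuse the computation in the proof of Proposition \ref{Rolling theorem}: the frame $\{e_1,e_2,\partial_t\}$ is carried by $dF$ to the orthogonal frame $\{(\cos t-\lambda_1\sin t)e_1,(\cos t-\lambda_2\sin t)e_2,N_t\}$ with $|N_t|=1$, whence $J(y,t)=(\cos t-\lambda_1\sin t)(\cos t-\lambda_2\sin t)$. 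Since $\Sigma$ is minimal, $\lambda_1=-\lambda_2$, and writing $a=\lambda_2(y)\ge0$ this collapses to the clean expression $J(y,t)=\cos^2 t-a^2\sin^2 t$. The definition of $c(\Sigma)$ forces $R<\arctan(1/\max_\Sigma\lambda_2)\le\arctan(1/a)$, i.e.\ $a\tan t<1$ throughout, so $J>0$, consistent with $F$ being an immersion.

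The main obstacle is the uniform lower bound for $\int_0^R J(y,t)\,dt$, and it is the genuinely delicate point because $J(y,t)\to0$ as $t\uparrow c(\Sigma)$ when $a>1$, so no pointwise bound $J\ge\text{const}$ is available. Instead I would integrate exactly,
\[
\frac1R\int_0^R(\cos^2 t-a^2\sin^2 t)\,dt=\frac{1-a^2}{2}+\frac{\sin 2R}{4R}\,(1+a^2),
\]
and observe that, since $\tfrac{\sin 2R}{2R}<1$, this average is strictly decreasing in $a^2$; hence over the admissible range $0\le a<\cot R$ its infimum is approached as $a\to\cot R$, where a short computation shows the right-hand side equals $\tfrac{1}{2\sin^2 R}\big(\tfrac{\sin 2R}{2R}-\cos 2R\big)$, a quantity that stays above $2/\pi$ for all $R\in(0,\pi/4]$ (recall $R<c(\Sigma)\le\pi/4$).

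This yields $\int_0^R J(y,t)\,dt\ge C\,R$ with $C=2/\pi$ universal, and feeding the bound into the volume formula gives
\[
\text{vol}(B_{2R}(x))\ge\text{vol}(\Omega)\ge\int_{\Sigma\cap B_R(x)}C\,R\,d_\Sigma=C\,R\cdot\text{area}(\Sigma\cap B_R(x)),
\]
as desired. The only points requiring care beyond routine computation are the exactness of the one-sided tube (guaranteed by Proposition \ref{Rolling theorem}, which is why embeddedness and $R<c(\Sigma)$ are essential) and the elementary but non-pointwise argument isolating the worst case $a\to\cot R$ in the Jacobian integral; everything else is a direct application of the change of variables and the triangle inequality.
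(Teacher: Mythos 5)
Your proof is correct and follows essentially the same route as the paper: both apply the Rolling Theorem to obtain a one-sided normal tube over $\Sigma\cap B_R(x)$ contained in $B_{2R}(x)$ and bound its volume from below via the change-of-variables formula with Jacobian $\cos^2 t-\lambda_2^2\sin^2 t$. The only difference is technical: the paper flows only up to $t=R/2$ so that the Jacobian admits a pointwise universal lower bound, whereas you flow up to $t=R$ and bound the $t$-average of the Jacobian exactly; both yield a universal constant $C$.
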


\begin{proof}
By Lemma \ref{Rolling theorem}, the following map is a diffeomorphism onto its image:
\[F:\Sigma\cap B_R(x)\times [0,\frac{R}{2})\rightarrow \mathbb{S}^3.\]
Let us denote the image by $\Omega$. By the change of variables formula,
\[\text{Vol}(\Omega)=\int_{0}^{\frac{R}{2}}\int_{\Sigma\cap B_R(x)}\bigg(\cos^2(s)-\lambda_2^2\sin^2(s)\bigg)d_{\Sigma}\,ds.\]
Hence, we can choose $0<C<\min\{\cos^2(s)(1-\frac{\tan^2(s/2)}{\tan^2(s)}): s\in [0,\frac{c}{2}]\}$ such that
 $\text{Vol}(\Omega)\geq C R \text{Area}(\Sigma\cap B_R(x))$. As $\Omega \subset B_{2R}(x)$, the lemma is proved.
\end{proof}

Let $\{\Sigma_n\}$ be a sequence of minimal hypersurfaces in a Riemannian manifold $(M,g)$. We say that $\{\Sigma_n\}$  converges, in the $C^{\infty}$ topology, to a surface $\Sigma$ if for every $x\in\Sigma$ and for $n$ large, the hypersurface $\Sigma_n$ can be written locally as  graphs over an open set of $T_p\Sigma$, and these graphs converge smoothly to the graph of $\Sigma$.

We say that $\{\Sigma_n\}$  satisfy  \textit{local area bounds} if there exist $r>0$ and $C>0$ such that $|\Sigma_n\cap B_r(x)|\leq C$ for every $x\in M$.

\begin{proposition}\label{compactness}
	\textit{Let $\{\Sigma_n\}\subset(M,g_n) $ be a sequence of properly embedded minimal surfaces  such that $\sup_{\Sigma_n}|A_n|\leq C$ and  with local area bounds. Assume  that $g_n$ converges to $g$, in the $C^{\infty}$ topology.} 
	
	\textit{If $\{\Sigma_n\}_{n=1}$ has an accumulation point,  then we can extract a  subsequence which converges  to a  minimal surface $\Sigma$ properly embedded in $(M,g)$.}
\end{proposition}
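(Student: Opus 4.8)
The plan is to exploit the uniform bound $\sup_{\Sigma_n}|A_n|\leq C$ to represent each $\Sigma_n$ locally as a union of graphs enjoying uniform higher-order estimates, and then to extract a smoothly convergent subsequence by Arzel\`a--Ascoli together with a diagonal argument, using the local area bounds to keep the number of graphical sheets finite. The first step is a uniform graphical lemma. Since $g_n\to g$ in $C^\infty$, on any fixed compact set the metrics $g_n$ have uniformly controlled geometry, so geodesic normal coordinates behave uniformly at some scale $\rho_0>0$. Combined with $|A_n|\leq C$, the tangent plane of $\Sigma_n$ tilts at a controlled rate as one moves along the surface; integrating this shows that for every $p\in\Sigma_n$ the component of $\Sigma_n\cap B_{\rho_0}(p)$ through $p$ is the graph, over a disk $D\subset T_p\Sigma_n$ of fixed radius, of a function $u$ with $|u|+|\nabla u|\leq 1$ and $|\nabla^2 u|\leq C'$, where $\rho_0$ and $C'$ depend only on $C$ and on the uniform geometry of the $g_n$.

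Next I would bootstrap regularity. Each such $u$ satisfies the minimal surface equation for $g_n$, a quasilinear equation that is uniformly elliptic on $D$ thanks to the gradient bound, and whose coefficients depend on $g_n$ and converge, with all derivatives, to those for $g$. Applying Schauder estimates iteratively yields uniform $C^{k,\alpha}(D')$ bounds on $u$ for every $k$ on a slightly smaller disk $D'$, with constants independent of $n$.

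I would then extract the limit. Let $\Sigma$ denote the set of accumulation points of $\{\Sigma_n\}$; by hypothesis $\Sigma\neq\emptyset$. Fix $p\in\Sigma$ together with $p_n\in\Sigma_n$, $p_n\to p$. After passing to a subsequence the tangent planes $T_{p_n}\Sigma_n$ converge in the Grassmannian to a plane that we declare to be $T_p\Sigma$, and the associated graph functions, being uniformly bounded in every $C^{k,\alpha}$, subconverge in $C^\infty$ to a function $u_\infty$. Passing to the limit in the minimal surface equation (the coefficients converge because $g_n\to g$) shows that the graph of $u_\infty$ is minimal for $g$, giving the local picture of $\Sigma$ near $p$. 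Crucially, the local area bound $|\Sigma_n\cap B_r(x)|\leq C$ forces the number of graphical sheets of $\Sigma_n$ over any fixed small disk to be at most $C$ divided by the area of a single sheet, hence uniformly finite; this prevents loss of mass and guarantees that $\Sigma$ is a genuine embedded surface rather than a degenerate limit. Since $\Sigma$ is defined as an accumulation set it is closed, and local graphicality makes it an embedded surface, so the inclusion is proper.

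The hard part will be the globalization and the bookkeeping of multiplicity: producing a \emph{single} subsequence that realizes the graphical convergence simultaneously over all of $\Sigma$, and verifying that the sheet count stays uniformly bounded so the limit is a smooth properly embedded surface. I would handle this by running the local extraction over a countable dense subset of $\Sigma$ (equivalently, over an exhaustion of $M$ by compact sets) and diagonalizing, so that the chosen subsequence works on every graphical neighborhood at once, with the area bound ensuring the total number of sheets over each compact region remains finite throughout. Minimality of $\Sigma$ with respect to $g$ is then inherited locally, and proper embeddedness follows from the closedness of the accumulation set together with the uniform graphical structure.
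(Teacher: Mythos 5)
The paper gives no proof of Proposition \ref{compactness}: it is stated without a proof environment and used as a standard compactness result (of the type in \cite{CS,Sharp}, or the graphical convergence theory of Colding--Minicozzi). So there is nothing in the paper to compare against line by line; your sketch is precisely the standard argument the author is invoking, and its overall architecture --- uniform graphical decomposition from $\sup|A_n|\leq C$, uniform ellipticity and Schauder bootstrap for the minimal surface equation with coefficients converging as $g_n\to g$, Arzel\`a--Ascoli plus diagonalization over an exhaustion, and the local area bound to cap the sheet count --- is correct.

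One step deserves more care than you give it. You write that ``local graphicality makes it an embedded surface,'' but graphicality of the individual limit sheets does not by itself rule out two distinct limit sheets touching tangentially at a point (this is exactly what can happen when several sheets of $\Sigma_n$ collapse together, i.e.\ the multiplicity of convergence exceeds one). To conclude embeddedness of the limit you need the strong maximum principle for minimal graphs: two distinct minimal graphs over the same disk that touch at an interior point and lie on one side of each other must coincide, so the accumulation set is a lamination whose leaves are disjoint embedded minimal surfaces. This is also the mechanism behind your multiplicity bookkeeping, and it is worth stating explicitly, since elsewhere in the paper (Lemma \ref{bounded curvature}) the author has to work separately, via the rolling map of Proposition \ref{Rolling theorem}, to show the convergence is multiplicity one --- the compactness proposition alone does not give that. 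With that point made precise, your argument is complete.
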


\begin{lemma}\label{total curvature inequality}
\textit{Let $M_p$ be a $3$-manifold with positive Ricci curvature and $\Sigma_p\subset M_p$ a closed orientable minimal surface with index one and genus $h$. Assume  that $(M_p,g_{p},x_p)$ converges, in the Cheeger-Gromov sense, to a flat manifold $(M,\delta,x_{\infty})$
	and that $(\Sigma_p,x_p)$ converges graphically with multiplicity one to a properly embedded  minimal surface $(\Sigma_{\infty},x_{\infty})$ in $(M,\delta,x_{\infty})$. 
	\begin{enumerate} 
		\item If $h=2$, then $\int_{\Sigma_{\infty}}K_{\infty}\,d_{\Sigma_{\infty}}=-6\pi$, $-4\pi$, or $0$. 
		\item[] 
		 \item If $h=3$, then $\int_{\Sigma_{\infty}}K_{\infty}\,d_{\Sigma_{\infty}}=0$.
\end{enumerate}}
\end{lemma}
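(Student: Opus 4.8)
The plan is to determine the value of $\int_{\Sigma_\infty}K_\infty\,d_{\Sigma_\infty}$ in two stages: first pin down the short list of values permitted by the index of $\Sigma_\infty$ and the flatness of the limit, then use the genus $h$ to discard the cases that cannot actually arise. Throughout I would exploit two facts. Since $(M,\delta)$ is flat, the Gauss equation reads $K_\infty=-\lambda_2^2\le 0$ pointwise on $\Sigma_\infty$, so the total curvature is nonpositive; and both $\int K$ and $\int|A|^2\,d$ are invariant under the metric rescalings implicit in the Cheeger--Gromov convergence, so they transfer cleanly between the scales. The first concrete step is to show $\text{index}(\Sigma_\infty)\le 1$ by lower semicontinuity of the index under smooth multiplicity-one convergence: if $\Sigma_\infty$ carried two $I_{\Sigma_\infty}$-orthogonal, compactly supported directions on which the index form is negative, then transplanting them through the local graphs realizing $\Sigma_p\to\Sigma_\infty$ would, for $p$ large, produce two such directions on $\Sigma_p$, forcing $\text{index}(\Sigma_p)\ge 2$ and contradicting the hypothesis.

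Next comes the dichotomy. If $\Sigma_\infty$ is stable, then being a complete orientable stable minimal surface in a flat (hence $\text{Ric}\equiv 0$) $3$-manifold it must be totally geodesic: its lift to $\mathbb{R}^3$ is a complete stable minimal surface, hence a plane by the Fischer-Colbrie--Schoen/do Carmo--Peng theorem, so $\Sigma_\infty$ is a totally geodesic flat quotient and $\int K_\infty=0$. If instead $\text{index}(\Sigma_\infty)=1$, then Theorem~\ref{curvature of index one minimal surface} gives $-8\pi<\int_{\Sigma_\infty}K_\infty\le -2\pi$, while Remark~\ref{multiple 2pi} forces the value to be a multiple of $2\pi$; hence $\int_{\Sigma_\infty}K_\infty\in\{-2\pi,-4\pi,-6\pi\}$. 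Combining the two cases, $\int_{\Sigma_\infty}K_\infty\in\{0,-2\pi,-4\pi,-6\pi\}$ unconditionally, a list that already contains both target conclusions.

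It remains to exclude the forbidden values using $h$, and this is where the real work lies. The guiding principle is that genus cannot be created in the limit: by the standard neck/bubbling analysis for degenerating minimal surfaces, the genus of the compactified limit satisfies $\text{genus}(\overline{\Sigma_\infty})\le h$, whereas curvature may only escape, so total curvature is merely semicontinuous (the regions of $\Sigma_p$ where $\lambda_2<1$ in the round metric have $K>0$, flatten out under rescaling, and carry their curvature away). To turn the discrete datum $\int_{\Sigma_\infty}K_\infty=-2\pi k$ into topology I would invoke the Jorge--Meeks/Gauss--Bonnet formula for complete finite-total-curvature minimal surfaces in flat $3$-manifolds (Meeks--Rosenberg), expressing $\int K_\infty$ in terms of $\text{genus}(\overline{\Sigma_\infty})$ and the number and asymptotic type of the ends. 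Feeding Ritor\'e's index-one models into this formula, and matching against $\text{genus}(\overline{\Sigma_\infty})\le h$ together with the way the ends of $\Sigma_\infty$ must be capped off inside the \emph{closed} surface $\Sigma_p$, should show that a model with $\int K_\infty=-2\pi$ is too simple (low genus, a single end) for its closed approximant to have genus $\ge 2$, excluding $-2\pi$ for both $h=2$ and $h=3$, and that for $h=3$ the remaining models $-4\pi,-6\pi$ are likewise incompatible with the end-capping, leaving only $\int K_\infty=0$.

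The main obstacle is precisely this last topological bookkeeping: quantifying the genus and the number of ends lost in passing from the closed surfaces $\Sigma_p$ to the noncompact limit $\Sigma_\infty$, and making the Gauss--Bonnet accounting for flat quotients exact enough to separate the individual cases. Everything else---the index bound, the stable case, and the combination of Ritor\'e with Meeks--Rosenberg---is comparatively soft. The genuine subtlety is that total curvature is \emph{not} conserved, because positive curvature escapes, so one cannot simply equate $\int K_\infty$ with the fixed value $2\pi(2-2h)$; the exclusion of specific values must instead be read off from which end structures are actually realizable, which is why the answer depends on $h$ in the non-monotone way recorded in the statement.
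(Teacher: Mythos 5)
Your first half matches the paper: index$(\Sigma_\infty)\le 1$ by lower semicontinuity, the stable case gives a totally geodesic limit with $\int K_\infty=0$, and in the index-one case Theorem \ref{curvature of index one minimal surface} together with Remark \ref{multiple 2pi} pins $\int_{\Sigma_\infty}K_\infty$ to $\{-2\pi,-4\pi,-6\pi\}$. The gap is in the exclusion step, and it is not a bookkeeping issue but the actual content of the lemma. The paper's mechanism is the identity
\[
\int_{\Sigma_\infty}K_\infty\,d_{\Sigma_\infty}=\lim_{p\to\infty}\int_{\Sigma_p\cap\{K_p\le 0\}}K_p\,d_{\Sigma_p}\;\le\;2\pi\chi(\Sigma_p)=4\pi(1-h),
\]
i.e.\ \emph{no negative curvature is lost} in the pointed limit; combined with Ritor\'e's lower bound $-8\pi<\int K_\infty$ this immediately kills $-2\pi$ when $h=2$ and kills everything when $h=3$. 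The easy semicontinuity only gives $\int_{\Sigma_\infty}K_\infty\ge\lim_p\int_{\{K_p\le0\}}K_p$ (negative mass can a priori drift out of view of the base point, making the limit \emph{less} negative), which is useless here. The reverse inequality is where the index-one hypothesis on $\Sigma_p$ and the positivity of $\mathrm{Ric}_{g_p}$ enter: one chooses a compact $U_p\subset\Sigma_p$ capturing the index (via Fischer--Colbrie's conformal structure of $\Sigma_\infty$ and logarithmic cutoffs), observes that $\Sigma_p\setminus U_p$ is then stable, and runs the stability inequality with the log cutoff together with the Gauss equation to force $\int_{(\Sigma_p\setminus U_p)\cap\{K_p\le0\}}K_p\to 0$.

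Your proposed substitute --- genus monotonicity, the Jorge--Meeks/Meeks--Rosenberg end formula, and an ``end-capping'' argument claiming the $-2\pi$ models are ``too simple'' for a genus $\ge 2$ approximant --- points in the wrong direction. The bound $\mathrm{genus}(\overline{\Sigma_\infty})\le h$ only says the limit is no more complicated than $\Sigma_p$; nothing in graphical multiplicity-one pointed convergence prevents a genus-three $\Sigma_p$ from converging, near one base point, to a helicoid quotient of total curvature $-2\pi$ while the remaining topology and negative curvature sit in a region of $M_p$ that escapes to infinity under the rescaling. Ruling that out is exactly the no-loss statement above, and it cannot be recovered from the asymptotic geometry of the limit models alone; it requires the global stability-plus-Ricci argument on the closed surfaces $\Sigma_p$. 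Without it you have only the candidate list $\{0,-2\pi,-4\pi,-6\pi\}$ for every $h\ge 2$, which is strictly weaker than the lemma.
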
 
\begin{proof}
	Since the Cheeger-Gromov convergence preserves  topology  in the compact setting, we conclude that $M$ is non compact.  It follows that $\Sigma_{\infty}$ is a complete non compact minimal surface in $M$ since $h\geq 2$. The  multiplicity one convergence implies that  $\Sigma_{\infty}$ is two sided. Moreover, the index of    $\Sigma_{\infty}$ is at most one by the lower semi continuity of the index. If $\text{Ind}(\Sigma_{\infty})=0$, then $\Sigma_{\infty}$ is flat and we are done.  Hence, we may assume that   $\text{Ind}(\Sigma_{\infty})=1$. 
	By  classical arguments in \cite{C},  $\Sigma_{\infty}$ is conformally equivalent to $\Sigma-\{q_1,\ldots,q_l\}$, where $\Sigma$ is a closed Riemann surface. Let $D_i(q_i)$ be conformal disks on $\Sigma$ centered at $q_i$. Given $\varepsilon>0$ we define $U_{\varepsilon}$ to be 
	$\Sigma-\cup_{i=1}^l\{z\in D_i(q_i);|z_i|\leq \varepsilon\}$.  On the  set  $U_{\varepsilon^2}$ we define the function $u_{\varepsilon}$  by:
	\[u_{\varepsilon}=0\quad\text{on}\quad U_{\varepsilon} \quad \text{and}\quad u_{\varepsilon}=\frac{\ln(\frac{|z|}{\varepsilon})}{\ln(\varepsilon)}\quad \text{for}\quad z \in  U_{\varepsilon^2}-U_{\varepsilon}.\]
	One can check that $\lim_{\varepsilon \rightarrow 0}\int_{\Sigma}|\nabla u_{\varepsilon}|^2 d_{\Sigma}=0$.
	The set $U_{\varepsilon}$ is seen as a subset of $\Sigma_{\infty}$ and,  by choosing  $\varepsilon$   small, we   may assume that $\text{Index}(U_{\varepsilon})=1$. It follows that for $p$ large depending on $\varepsilon$, there exist $U_p\subset U_p^{\prime} \subset \Sigma_p$ for which $\text{Index}(U_p)=1$ and such that $U_p$ and $U_p^{\prime}$ converge graphically to $U_{\varepsilon}$ and $U_{\varepsilon^2}$, respectively.
	Moreover, by means of $u_{\varepsilon}$ we can construct, for  each $p$ large enough,  an function $u_p$ on $\Sigma_p$ satisfying $u_p=0$ on $U_p$, $u_p=1$ at $\Sigma_p- U_p^{\prime}$, and such that $\lim_{p\rightarrow \infty}\int_{\Sigma_p}|\nabla u_p|^2\,d_{\Sigma_p}=0$, i.e., $\int_{\Sigma_p}|\nabla u_p|^2\,d_{\Sigma_p}=O_1(\varepsilon)$. 
	As  $\Sigma_p$ and $U_p$ both have Index one, we concluded that $\text{Indice}(\Sigma_p-U_p)=0$.
	As $\text{supp}(u_p)\subset \Sigma_p-U_p$, we obtain
		\[0\leq \int_{\Sigma_p}\bigg(|\nabla u_p|^2-(\text{Ric}_{g_p}(N_p,N_p)+ |A_p|^2)\,u_p^2\bigg)\,d_{\Sigma_p}.\]
		By the Gauss equation, $2\overline{K}_p= 2K_p+ |A_p|^2$, where $\overline{K}_p$ is the sectional curvature of $M$ in the direction of $T\Sigma_p$. Therefore,
			\begin{eqnarray*}
		0&\leq&
		\int_{\Sigma_p} \bigg(|\nabla u_p|^2 - (\text{Ric}_{g_p}(N_p)+ 2\overline{K}_p)\,u_p^2 + 2K_pu_p^2\bigg)\,d_{\Sigma_p}\\
		&=&\int_{\Sigma_p} (|\nabla u_p|^2d_{\Sigma_p} +2\int_{\{K_p\leq 0\}}K_pu_p^2 )d_{\Sigma_p} +  2\int_{\{K_p>0\}} |K_p|\,u_p^2 d_{\Sigma_p} \\
		&& -\int_{K_p\leq 0}  (\text{Ric}_{g_p}(N_p)+ 2\overline{K}_p)u_p^2 - \int_{K_p> 0}  (\text{Ric}_{g_p}(N_p)+ 2\overline{K}_p)u_p^2.
	\end{eqnarray*}
If $\{e_1,e_2\}$ is an orthonormal base for $T\Sigma_p$, then $\text{Ric}_{g_p}(e_1)= \overline{K}_p + \overline{K}(e_1,N)$, $\text{Ric}_{g_2}(e_2)=\overline{K}_p +\overline{K}_{p}(e_2,N)$, and $\text{Ric}_{g_p}(N_p)=\overline{K}(e_1,N) + \overline{K}(e_2,N)$. This immediately  implies that $2\overline{K}+ \text{Ric}_{g_p}(N_p)= \text{Ric}_{g_p}(e_1)+ \text{Ric}_{g_p}(e_2)$. Hence,
\begin{eqnarray*}
	0&\leq & \int_{\Sigma_p} |\nabla u_p|^2d_{\Sigma_p} +2\int_{\{K_p\leq 0\}}K_pu_p^2 d_{\Sigma_p} \\
	&& \quad\quad\quad\quad\quad\quad +  \int_{\{K_p>0\}} \bigg(2|K_p| -\text{Ric}_{g_p}(N_p)- 2\overline{K}_p \bigg)\,u_p^2 \\
	&=& \int_{\Sigma_p} |\nabla u_p|^2d_{\Sigma_p} +2\int_{\{K_p\leq 0\}}K_pu_p^2 d_{\Sigma_p} \\
&&\quad  \quad \quad \quad\quad \quad \quad \quad \quad-	\int_{\{K_p>0\}}\bigg( |A_p|^2+\text{Ric}_{g_p}(N_p)\bigg)\,u_p^2.\\
		&\leq& \int_{\Sigma_p} |\nabla u_p|^2d_{\Sigma_p} +  \int_{\{u_p\equiv 1\}\cap\{K_p\leq 0\}} 2K_p\,d_{\Sigma_p} \\
		&=& O_1(\varepsilon) +  \int_{\Sigma_p\cap \{K_p\leq 0 \}}2K_p\,d_{\Sigma_p}
		- \int_{U_p\cap \{K_p\leq 0\} }2K_p\, d_{\Sigma_p}.
	\end{eqnarray*}
	On the other hand, we have that $\int_{U_p\cap \{K_p\leq 0\}}K_p\,d_{\Sigma_p}= \int_{\Sigma_{\infty}}K_{\infty}d_{\Sigma_{\infty}}  +O_2(\varepsilon)$, for the total curvature of $\Sigma_{\infty}$ is uniformly close to that of $U_{\varepsilon}$ which is uniformly close to that of $\int_{U_p\cap \{K_p\leq 0\}}K_p\,d_{\Sigma_p}$. Hence,
	\[\int_{\Sigma_{\infty}} K_{\infty}\,d_{\Sigma_{\infty}} \leq O_1(\varepsilon)+O_2(\varepsilon) + \int_{\Sigma_{p}\cap \{K_p\leq 0\}}K_{p}\,d_{\Sigma_{p}}.\]
	This implies that $\int_{\Sigma_{\infty}} K_{\infty}\,d_{\Sigma_{\infty}} \leq  \int_{\Sigma_{p}\cap \{K_p\leq 0\}}K_{p}\,d_{\Sigma_{p}}$ since $\int_{\Sigma_{\infty}} K_{\infty}d_{\Sigma_{\infty}}$ and $\int_{\Sigma_{p}\cap \{K_p\leq 0\}}K_{p}d_{\Sigma_p}$ are independent of $\varepsilon$. On the other hand, \[\int_{\Sigma_{p}\cap \{K_p\leq 0\}}K_{p}\,d_{\Sigma_{p}}\leq \int_{\Sigma_{\infty}} K_{\infty}\,d_{\Sigma_{\infty}}\] by the upper semi continuity of the limit of non-positive functions. Therefore,
	\begin{equation}\label{contradiction}
	-8\pi< \int_{\Sigma_{\infty}} K_{\infty}\, d_{\Sigma_{\infty}}= \lim_{p \rightarrow\infty} \int_{\Sigma_{p}\cap \{K_p\leq 0\}}K_p\,d_{\Sigma_p}\leq 4\pi(1-h).
	\end{equation}
	The first strictly inequality is from Theorem \ref{curvature of index one minimal surface} and the second inequality if from the Gauss-Bonnet Theorem. If $h=2$, then $\int_{\Sigma_{\infty}}K_{\Sigma_{\infty}}\,d_{\Sigma_{\infty}}=-4\pi$ or $-6\pi$ by Remark \ref{multiple 2pi}. If $h=3$, then  (\ref{contradiction}) becomes a contradiction and the lemma is proved.
	\end{proof}

\begin{corollary}
\textit{If $M_p$ is a spherical space form and $\text{genus}(\Sigma_p)=2$, then
\[
\int_{\Sigma_{\infty}}K_{\infty}\,d_{\Sigma_{\infty}}=-4\pi\quad \text{or}\quad 0.
\] }
\end{corollary}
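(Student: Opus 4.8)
The plan is to deduce this from Lemma \ref{total curvature inequality}(1), which already gives $\int_{\Sigma_\infty}K_\infty\in\{-6\pi,-4\pi,0\}$; it therefore suffices to rule out the value $-6\pi$. The extra input over the general positive Ricci setting is that $M_p$ is a quotient of the round $\mathbb{S}^3$, so its Cheeger--Gromov blow-up limit $(M,\delta)=\mathbb{R}^3/G$ is one of the explicitly classified non-compact orientable flat $3$-manifolds recalled in the Preliminaries: up to the choice of blow-up, $G$ is trivial, is generated by a screw motion $S_\theta$, is generated by two independent translations (so $M=T^2\times\mathbb{R}$), or is generated by a screw motion of angle $\pi$ together with an orthogonal translation. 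I would organize the proof according to this list.

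First I would dispose of the cases in which $G$ consists only of translations. If $M=\mathbb{R}^3$, then $\Sigma_\infty$ is a complete, non-compact, index one minimal surface of finite total curvature in $\mathbb{R}^3$; its Gauss map extends across the ends of the conformal compactification, so $\int_{\Sigma_\infty}K_\infty=-4\pi\deg(N)\in 4\pi\mathbb{Z}$ and in particular cannot equal $-6\pi$ (indeed it is the catenoid, with total curvature $-4\pi$). If $M=T^2\times\mathbb{R}$, the Remark following Theorem \ref{curvature of index one minimal surface}, based on \cite{RR}, gives $\int_{\Sigma_\infty}K_\infty=-4\pi$ outright. In both sub-cases $-6\pi$ is excluded.

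The remaining, and genuinely hard, case is that of a screw-motion limit, including the angle-$\pi$ half-turn manifold. Ritor\'{e}'s Theorem \ref{curvature of index one minimal surface} only confines $\int_{\Sigma_\infty}K_\infty$ to $(-8\pi,-2\pi]$, and the helicoid of total curvature $-2\pi$ shows that odd multiples of $2\pi$ genuinely occur in such quotients, so $-6\pi$ is not excluded on formal grounds and the argument must use more. My approach would be to pass to the translation cover: a screw motion of rational angle $2\pi/k$ has an index-$k$ translation subgroup, so $M$ is finitely covered by $\mathbb{R}^2\times S^1$ (respectively by $T^2\times\mathbb{R}$ in the half-turn case), a flat manifold whose deck group consists only of translations, and hence is subject to the Remark following Theorem \ref{curvature of index one minimal surface} and to \cite{RR}. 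A properly embedded index one minimal surface in such a cover has total curvature exactly $-4\pi$, a value incompatible with the lift of a $-6\pi$ surface, whose total curvature would be the corresponding multiple of $-6\pi$. The main obstacle is precisely that the index of $\Sigma_\infty$ need not be preserved under the cover — it may a priori jump above one — so one cannot invoke \cite{RR} directly; controlling the index of the lifted surface, or else arguing intrinsically via a Jorge--Meeks Gauss--Bonnet relation $\int_{\Sigma_\infty}K_\infty=2\pi\chi(\bar\Sigma_\infty)-\sum_{\text{ends}}w_j$ with odd end-windings $w_j$ against the genus-two constraint supplied by the multiplicity-one convergence, is the heart of the matter, since neither Ritor\'{e}'s interval nor Remark \ref{multiple 2pi} alone suffices to eliminate the single exceptional value $-6\pi$.
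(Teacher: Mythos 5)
Your reduction to excluding the value $-6\pi$ is the right first step, and your treatment of the purely translational limits ($\mathbb{R}^3$ and $T^2\times\mathbb{R}$) is sound. But the screw-motion case is a genuine gap, and you have in effect conceded it yourself: you note that Ritor\'e's interval $(-8\pi,-2\pi]$ and Remark \ref{multiple 2pi} leave $-6\pi$ open, you propose to close it by passing to a translation cover, and you then observe that the index of the lifted surface is not controlled, so \cite{RR} cannot be invoked --- which is exactly where the argument stops being a proof. Worse, the paper itself records, in the remark following the helicoid example, that the existence of an index one minimal surface of total curvature $-6\pi$ in some $\mathbb{R}^3/G$ is an open question; any argument that tries to exclude $-6\pi$ using only the intrinsic data of the flat limit and the index-one property of $\Sigma_{\infty}$ would have to settle that question, so this route cannot succeed as stated.

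The paper's proof bypasses the classification of flat limits entirely and instead exploits rigidity in the proof of Lemma \ref{total curvature inequality}. Since the limit total curvature equals $\lim_{p}\int_{\{K_p\le 0\}}K_p\,d_{\Sigma_p}$ (there is no loss of negative Gaussian curvature), every inequality in that chain must become an asymptotic equality; in particular the discarded nonpositive term $-\int_{\{K_p>0\}}\bigl(|A_p|^2+\text{Ric}_{g_p}(N_p)\bigr)u_p^2\,d_{\Sigma_p}$ must tend to zero. For a spherical space form one may normalize so that $\overline{K}_p\equiv 1$, hence $\text{Ric}_{g_p}(N_p)\equiv 2$, and this forces $|\Sigma_p\cap\{K_p>0\}|\to 0$; since $K_p\le\overline{K}_p=1$ there, $\int_{\{K_p>0\}}K_p\,d_{\Sigma_p}\to 0$, and Gauss--Bonnet with $h=2$ then pins $\lim_p\int_{\{K_p\le 0\}}K_p\,d_{\Sigma_p}$ at $-4\pi$ (the value $0$ arising only when $\Sigma_{\infty}$ is stable, hence totally geodesic). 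The constancy of the ambient sectional curvature is the extra input your approach is missing; without something of this kind the single exceptional value $-6\pi$ cannot be eliminated.
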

\begin{proof}
Since there is no loss of negative Gaussian curvature, then \[
\lim_{p \rightarrow \infty}\int_{\{K_p>0\}} (2K_p -\text{Ric}_{g_p}(N_p)- 2\overline{K}_p\,d_{\Sigma_p}=0.
\]
By the scale invariance of this quantity, we can assume that $\overline{K}_p=1$. The Gauss equation then implies that $\lim_{p\rightarrow\infty} |\Sigma_p\cap \{K_p>0\}|=0$. Hence, $\lim_{p \rightarrow \infty}\int_{\{K_p>0\}} K_p \,d_{\Sigma_p}=0$. The corollary now follows from the Gauss-Bonnet Theorem.
\end{proof}

\begin{theorem}
\textit{There exists an integer  $p_0$ such that if $\Sigma$ is an orientable index one minimal surface in an spherical space form $M^3$  with $|\pi_1(M)|\geq p_0$, then $\text{genus}(\Sigma)\leq 2$.}
\end{theorem}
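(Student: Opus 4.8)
The plan is to argue by contradiction, using the fact (Ros \cite{Ros}) that a two-sided index one minimal surface in a $3$-manifold with nonnegative Ricci curvature has genus at most three; since spherical space forms carry the round metric and hence $\text{Ric}=2g_0>0$, it suffices to rule out genus three. So suppose no such $p_0$ exists: there is a sequence of spherical space forms $M_p=\mathbb{S}^3/G_p$ with $|G_p|=|\pi_1(M_p)|\to\infty$, each containing an orientable index one minimal surface $\Sigma_p$ with $\text{genus}(\Sigma_p)=3$. Note $\text{Vol}(M_p)=2\pi^2/|G_p|\to 0$. By the Gauss equation $K_p=1-\lambda_2^2$ and Gauss--Bonnet, $\int_{\Sigma_p}K_p=4\pi(1-3)=-8\pi$, so $\int_{\Sigma_p}\lambda_2^2=\text{Area}(\Sigma_p)+8\pi$; since $|A_p|^2=2\lambda_2^2$, a uniform bound $\sup_{\Sigma_p}|A_p|\le C$ would force a uniform lower bound $\text{Area}(\Sigma_p)\ge 8\pi/(C^2/2-1)>0$.

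The first step is to show the second fundamental forms blow up, i.e. $\kappa_p:=\sup_{\Sigma_p}|A_p|\to\infty$. If not, then along a subsequence $\kappa_p\le C$, so $c(\Sigma_p)\ge c_0>0$ uniformly and $\text{Area}(\Sigma_p)\ge c>0$ by the previous paragraph. By the Rolling Theorem (Proposition \ref{Rolling theorem}) each $\Sigma_p$ then carries an embedded one-sided tube of radius $c_0$, whose volume is bounded below by $c_1\,\text{Area}(\Sigma_p)$ via the change-of-variables computation in the proof of Lemma \ref{local area bounds}. As this tube sits inside $M_p$, we get $\text{Vol}(M_p)\ge c_1 c>0$, contradicting $\text{Vol}(M_p)\to 0$. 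Hence $\kappa_p\to\infty$.

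Next I would blow up at a point of maximal curvature. Choose $x_p\in\Sigma_p$ with $|A_p(x_p)|=\kappa_p$ and rescale the metric to $\bar g_p=\kappa_p^2 g_0$, so the ambient sectional curvature is $\kappa_p^{-2}\to 0$ while $|\bar A_p|\le 1$ and $|\bar A_p(x_p)|=1$. The crucial point is non-collapsing: since $\bar\Sigma_p$ passes through $x_p$ with $|\bar A_p|\le 1$, its area in the unit ball $\bar B_1(x_p)$ is bounded below, and Lemma \ref{local area bounds}, applied in $\bar g_p$, then bounds $\text{vol}(\bar B_2(x_p))$ below by a positive constant. Together with the curvature bound, Cheeger's injectivity radius estimate yields a uniform lower bound on the injectivity radius, so $(M_p,\bar g_p,x_p)$ subconverges in the Cheeger--Gromov sense to a complete flat $3$-manifold $(M,\delta,x_\infty)$. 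Using $|\bar A_p|\le 1$ and the local area bounds of Lemma \ref{local area bounds}, Proposition \ref{compactness} then provides a subsequence with $\bar\Sigma_p\to\Sigma_\infty$ a properly embedded minimal surface in $(M,\delta)$; because $|A_\infty(x_\infty)|=1$, the limit is not totally geodesic and $\int_{\Sigma_\infty}K_\infty<0$.

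Finally I would check the convergence is multiplicity one and conclude. If the multiplicity were $\ge 2$, the normalized gap between consecutive sheets would converge to a nontrivial nonnegative Jacobi field on $\Sigma_\infty$, so by Fischer-Colbrie--Schoen \cite{FCS} $\Sigma_\infty$ would be stable, hence totally geodesic, contradicting $|A_\infty(x_\infty)|=1$. Thus $\bar\Sigma_p\to\Sigma_\infty$ graphically with multiplicity one, and Lemma \ref{total curvature inequality}(2) applied to the genus three surfaces $\Sigma_p$ forces $\int_{\Sigma_\infty}K_\infty=0$, contradicting $\int_{\Sigma_\infty}K_\infty<0$. This contradiction rules out genus three and, with Ros' bound, gives $\text{genus}(\Sigma)\le 2$. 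I expect the main obstacle to be the non-collapsing step: guaranteeing that the Cheeger--Gromov limit of the rescaled manifolds is a genuine (uncollapsed) flat $3$-manifold rather than a lower-dimensional or singular space, which is exactly where the Rolling Theorem and the local area bounds of Lemma \ref{local area bounds} are indispensable.
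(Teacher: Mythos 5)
Your reduction to genus three via Ros' bound and your blow-up at the point of maximal curvature reproduce, almost verbatim, the paper's Lemma \ref{bounded curvature}: there the author assumes $\rho_p/\lambda_p\to\infty$ (curvature blowing up faster than the inverse injectivity radius), rescales by $\rho_p=\sup|A_p|$, gets a multiplicity one limit in $\mathbb{R}^3$ via the rolling tube, and derives the same contradiction from Lemma \ref{total curvature inequality}(2). The genuine gap in your version is the non-collapsing step, and the way you propose to fill it is circular. You claim that $\mathrm{area}(\bar\Sigma_p\cap \bar B_1(x_p))$ is bounded below because $\bar\Sigma_p$ is minimal and passes through the centre; but that monotonicity-type lower bound presupposes a lower bound on the \emph{ambient} injectivity radius at $x_p$ at the rescaled scale, which is exactly what you are trying to establish (Lemma \ref{local area bounds} only converts an area lower bound into a volume lower bound; it does not produce the area lower bound). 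A concrete warning example is the Clifford torus in $L(p,1)$: at its own curvature scale $|A|=\sqrt2$, its area inside a unit ball tends to zero as $p\to\infty$, precisely because the ambient collapses at that scale. So in the regime $\kappa_p\cdot\mathrm{inj}(M_p)\to 0$ your rescaled manifolds have no three-dimensional Cheeger--Gromov limit, Proposition \ref{compactness} and Lemma \ref{total curvature inequality} do not apply, and your contradiction evaporates; and in the regime where you instead rescale by the inverse injectivity radius, the curvature bound of Lemma \ref{bounded curvature} makes the limit surface totally geodesic, which is perfectly consistent with $\int_{\Sigma_\infty}K_\infty=0$ and yields no contradiction either.

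That second regime is not a technicality: it is where the bulk of the paper's proof lives. After establishing that the natural scale is the injectivity radius and that the limit there must be totally geodesic (Lemmas \ref{total curvature inequality}, \ref{bounded curvature}, \ref{not totally geodesic}, \ref{limit is a cylinder}), the author must rule out a totally geodesic limit using the specific structure of spherical space forms: the classification of Theorem \ref{classification of elliptic manifolds}, the Hopf fibration and the length of its fibers, a Poincar\'e--Hopf argument producing a point where a short fiber is orthogonal to $\Sigma_p$ (forcing $c(\Sigma_p)\le C/p$), the Hausdorff convergence of $\hat\Sigma_p$ to $T_{\pi/4}$ in collapsing lens spaces, and the final lemma showing that if every pointed limit is $\mathbb{S}^1\times\mathbb{R}$ then the genus is one. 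None of this appears in your proposal, so even granting your first step ($\kappa_p\to\infty$, which is fine), the argument as written only covers the case already disposed of by Lemma \ref{bounded curvature} and leaves the main case open.
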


\begin{proof}
In what follows $M_p$ denotes an spherical space form such that $|\pi_1(M_p)|=p$, i.e., $M_p=\mathbb{S}^3/G_p$ and $|G_p|=p$.
Arguing by contradiction, let us assume the existence of  a sequence of spherical space forms $\{M_{p_i}\}_{i=1}^{\infty}$ such that  each $M_{p_i}$ contains an index one minimal surface $\Sigma_{p_i}$ of genus three and that $\lim_{i\rightarrow\infty}p_i=\infty$.

We consider the  rescaled sequence  $(M_{p_i},\lambda_{p_i}^2\,g_{\mathbb{S}^3},x_{p_i})$, where $x_{p_i}\in\Sigma_{p_i}$ and $\lambda_{p_i}>0$ is such that $\lim_{i\rightarrow
\infty}\lambda_{p_i} \text{inj}_{x_p}M_{p_i}>0$. Similarly, we consider $(\Sigma_{p_i},x_{p_i})\subset (M_{p_i},\lambda_{p_i}^2\,g_{\mathbb{S}^3},x_{p_i})$. By Cheeger-Gromov's compactness theorem,  there exists a subsequence $\{M_{p_i}\}_{i\in\mathbb{N}}$ which converges in the Cheeger-Gromov sense to a flat manifold $(M,\delta,x_{\infty})$.

\begin{lemma}\label{local area bounds 2}
\textit{	Let $(M_{p_i},\lambda_{p_i}^2\,g_{\mathbb{S}^3},x_{p_i})\xrightarrow{C-G}(M,\delta,x_{\infty})$ as above and assume that $\liminf_{i\rightarrow \infty}\lambda_{p_i}c(\Sigma_{p_i})>0$. Then $\{(\Sigma_{p_i},\lambda_{p_i}^2\,g_{\mathbb{S}^3},x_{p_i})\}_{i\in\mathbb{N}}$ satisfies local area bounds in $B_{R}(x_i)$ for some $R>0$.}
\end{lemma}
\begin{proof}
	As $\Sigma_p^{\prime}$ is $G_p$ invariant, then  $F:\Sigma_p^{\prime}\times [0,c(\Sigma_p^{\prime}))\rightarrow \mathbb{S}^3$ is also $G_p$ invariant. Hence, it makes sense to consider $F:\Sigma_p\times [0,c(\Sigma_p))\rightarrow M_p$ which is a diffeomorphism onto its image by Proposition \ref{Rolling theorem}. Let $r<\frac{1}{4}\min\{1,\liminf_{i\rightarrow \infty}\lambda_{p_i}c(\Sigma_{p_i})\}$, then for every $y_i\in \Sigma_{p_i}\cap B_R(x_i)$ we have that $\text{Vol}(B_{\frac{2r}{\lambda_{p_i}}}(y_p))\geq C_1\,r\, \text{Area}(\Sigma_p\cap B_\frac{r}{\lambda_{p_i}}(y_p))$ by Lemma \ref{local area bounds}. Since this formula is scale invariant,  the lemma is proved.
\end{proof}

\begin{lemma}\label{bounded curvature}
\textit{Let $A_p$ be the second fundamental form of $\Sigma_p$ in $M_p$. There exist $C>0$ such that $\sup_{\Sigma_p}|A_p|_{\lambda_p^2g_{\mathbb{S}^3}}=\sup_{\Sigma_p}\frac{1}{\lambda_p^2}|A_p|^2\leq C$.}
\end{lemma}
\begin{proof}
Let $y_p\in \Sigma_p \subset M_p$ be such that $|A_p|(y_p)=\max_{\Sigma_p}|A_p|^2$ and define the quantity 
$\rho_p= \max_{\Sigma_p}|A_p|(y_p)$. Arguing by contradiction, we assume that $\frac{\rho_p}{\lambda_p}\rightarrow \infty$. We consider the surface $\widehat{\Sigma}_p=(\Sigma_p,y_p) \subset (M_p),\rho_p^2\,g_{\mathbb{S}^3},y_p)$.
Under this scale, the sequence  $(M_p),\rho_p^2\,g_{\mathbb{S}^3},y_p)$  converges to $(\mathbb{R}^3,\delta,0)$ as $p \rightarrow \infty$. Moreover, the surface $\widehat{\Sigma}_p$ satisfies  $\max_{\widehat{\Sigma}_p}|A_p^{\prime}(x)|^2=|A_p^{\prime}(0)|^2=1$ and enjoys local area bounds by previous lemma.
By Proposition \ref{compactness}, $\widehat{\Sigma}_p$ converges to a non-flat properly embedded minimal surface $\Sigma_{\infty}\subset \mathbb{R}^3$ of index one. 
The convergence is with multiplicity one. Indeed, applying Proposition \ref{Rolling theorem} to $N$ and $-N$ we obtain that $F: (\Sigma_p,x_p)\times (-\alpha, \alpha)\rightarrow (M_{p_i}, \rho_{p_i}^2 g_{\mathbb{S}^3},x_p)$, with $0<\alpha < \liminf_{i\rightarrow \infty}\rho_{p_i} c(\Sigma_{p_i})$, is a diffeomorphism onto its image. Hence, there exists
a tubular neighbourhood of radius $\alpha$ around each $\Sigma_{p_i}$ in $(M_{p_i},\rho_{p_i}^2g_{\mathbb{S}^3})$ and  the convergence is with multiplicity one.  Since $g(\Sigma_{p_i})=3$, Lemma \ref{total curvature inequality} implies that $\int_{\Sigma_{\infty}}K_{\infty}\,d_{\Sigma_{\infty}}=0$. This contradicts  $|A_{\Sigma_{\infty}}|(0)=1$.
\end{proof}

Combining Lemma  \ref{local area bounds 2}, Lemma \ref{bounded curvature}, and Proposition \ref{compactness} we obtain:
\begin{lemma}\label{convergence}
	\textit{
		There exist  a properly embedded orientable minimal surface 
		$\Sigma_{\infty}\subset (M,\delta,x_{\infty})$} such that:
	\[\{\Sigma_{p_l}\}_{l \in \mathbb{N}}\subset (M_{p_l},\lambda_{p_l}^2\,g_{\mathbb{S}^3},x_{p_l}) \rightarrow 
	\Sigma_{\infty}\, \text{in the}\,\,\, C^k\,\,\, \text{topology}.
	\] 
	The convergence is with multiplicity one and the Morse index of $\Sigma_{\infty}$ is at most one.
\end{lemma}

\begin{lemma}\label{not totally geodesic}
	\textit{Let  $x_p \in \Sigma_p$ be  such that $\sup_{\Sigma_p}|A_p|=|A_p|(x_p)$. If $\lim_{p\rightarrow\infty}\lambda_p\,c(\Sigma_p)<\infty$, then
		\[\lim_{p \rightarrow\infty} \frac{|A_p|^2(x_p)}{\lambda_p^2}> 0.\] }
\end{lemma}
\begin{proof}
	As $\lim_{p\rightarrow\infty}\lambda_p\,c(\Sigma_p)<\infty$, there exists a positive constant $C$ such that $c(\Sigma_{p_i})\leq \frac{C\pi}{\lambda_{p_i}}$ for every $i\geq 1$. Hence,
	\begin{eqnarray*}
		c(\Sigma_p)\leq \frac{C\pi}{\lambda_p} \Leftrightarrow
		\arctan\bigg(\frac{1}{\lambda_2(x_p)}\bigg)\leq \frac{C\pi}{\lambda_p}\Leftrightarrow \lambda_2(x_p)\geq \frac{1}{\tan(\frac{C\pi}{\lambda_p})},
	\end{eqnarray*}
	where $\lambda_2(x)$ is the largest principal curvature of $\Sigma_p$ at $x$. Therefore,
	\[\lim_{p\rightarrow\infty} \frac{|A_p|^2(x_p^{\prime})}{\lambda_p^2}= \lim_{p\rightarrow\infty}\frac{2\lambda_2^2(x_p)}{\lambda_p^2}\geq \lim_{p\rightarrow \infty}\frac{2}{\lambda_p^2 \tan^2(\frac{C\pi}{\lambda_p})}=\frac{2C^2}{\pi^2}\]
	and the lemma is proved.
\end{proof}

\begin{lemma}\label{limit is a cylinder}
\textit{	If for each $p$ there exists  $\lambda_p$ such that   $\text{inj}_{x}M_p \geq \frac{C}{\lambda_p}$ for every  $x\in \Sigma_p$, then $\lim_{p\rightarrow\infty} \lambda_p\, c(\Sigma_p)=\infty$.}
\end{lemma}
\begin{proof}
	Let  $x_p \in \Sigma_p$  be such that $\sup_{\Sigma_p}|A_p|=|A_p|(x_p)$.
	By Lemma \ref{convergence}, $(M_p,\lambda_p^2g_0,x_p)\rightarrow (M,\delta,x_{\infty})$ in the Cheeger-Gromov convergence and   $\Sigma_p\rightarrow \Sigma_{\infty}$ in $(M,\delta,x_{\infty})$. If $\lim_{p\rightarrow\infty} \lambda_p\, c(\Sigma_p)<\infty$, then, by Lemma \ref{not totally geodesic}, $\Sigma_{\infty}$ is not totally geodesic. This contradicts Lemma \ref{total curvature inequality}.
\end{proof}

By Theorem \ref{classification of elliptic manifolds}, we may assume that  the subsequence $\{M_{p_i}\}_{i\in \mathbb{N}}$ satisfies either Case I, II, or III below:

\begin{flushleft} 
	Case I: The sequence $\{M_{p_i}\}_{i \in \mathbb{N}}$ is such that $H_2^p=\pi_2(\varphi(G_p))$ is  either $T$, $O$, or $I$.
\end{flushleft}

\begin{lemma}\label{injective radius homology sphere}
\textit{	If $M_p$ is such that $H_2^p=T$, $O$, or $I$, then $\text{inj}_xM_p=O(\frac{1}{p})$ for every $x\in M_p$. }
\end{lemma}
\begin{proof}
	Since the group $G_p$ preserves the Hopf fibers, the Hopf fibers have size  $O(\frac{1}{p})$.
	Let $h: (M_p,p^2\,g_{\mathbb{S}^3})\rightarrow (\mathbb{S}^2(\frac{1}{2})/H_2^p,p^2\,g_{\mathbb{S}^2})$ be the Hopf fibration.  Let $B_r(x_p)$ be the  ball of radius $r$ in $(M_p,p^2\,g_{\mathbb{S}^3})$. Since $H_2^p=T$, $O$, or $I$, there exists $c_0>0$ such that   $\text{vol}(h(B_r(x_p))\geq c_0 r^2$. By the co-area formula,
\begin{eqnarray*}
\text{vol}(B_{2r}(x_p)) \geq \int_{h(B_r(x_p))} \mathcal{H}^1(h^{-1}(y))\,d\mathcal{H}^2(y)\geq C\,c_0\,r^2.
\end{eqnarray*}
Cheeger's inequality implies that $\text{inj}_{x_p}(M_p,p^2\,g_{\mathbb{S}^3})\geq i_0$ for $i_0>0$.
\end{proof}

Since $g(\Sigma_p)\geq 3$, there exist a point $y_p\in\Sigma_p$ such that the Hopf fiber through $y_p$ is orthogonal to $\Sigma_p$. If we parametrize such fiber by $\gamma:[0,2\pi]\rightarrow M_p$, then the map $F$ from Proposition \ref{Rolling theorem} satisfies $F(y_p,t)=\gamma(t)$. It follows from Lemma \ref{injective radius homology sphere} that $c(\Sigma_p)\leq \frac{C}{p}$. This contradicts Lemma \ref{limit is a cylinder}.

\begin{flushleft} 
	Case II: The sequence $\{M_{p_i}\}_{i \in \mathbb{N}}$ is such that $H_2^p=\pi_2(\varphi(G_p))$ is  $\mathbb{Z}_m$.
\end{flushleft}
This corresponds to a subsequence of Lens spaces $L(p_i,q_i)$. The next lemma is useful for  the analysis of this case:
\begin{lemma}\label{lens space}
\textit{If $M_p=L(p,q)$ and $\text{diameter}(T_{\frac{\pi}{4}}/\mathbb{Z}_p)>\varepsilon$ for every $p$, then $\text{inj}_{x_p}M_p=O(\frac{1}{p})$ and $(M_p,p^2\,g_{\mathbb{S}^3},x_p)\xrightarrow{C-G}(\mathbb{S}^1\times \mathbb{R}^2,\delta, x_{\infty})$. Moreover, there exist a unit vector field $X\in \mathcal{X}(\mathbb{S}^3)$ which is $\mathbb{Z}_p$ invariant  and such that its orbits converge to the standard $\mathbb{S}^1$ fibers of $\mathbb{S}^1\times \mathbb{R}^2$.}
\end{lemma}
\begin{proof}
See Section 3 in \cite{V}.
\end{proof}

For  subsequences satisfying Lemma \ref{lens space}, we pick  $y_p\in \Sigma_p$ such that $g_{\mathbb{S}^3}(N(y_p),X(y_p))=\pm 1$. The existence of $y_p$ is from the Poincar\'{e}-Hopf Index Theorem applied to the vector field $X^{T}\in\mathcal{X}(\Sigma_p)$. Applying Lemmas \ref{convergence}  and \ref{total curvature inequality}, we conclude that $\Sigma_{\infty}$   is an union of  planes orthogonal to the  fibers of $\mathbb{S}^1\times \mathbb{R}^2$. This implies that  $\lim_{p\rightarrow\infty}p\,c(\Sigma_p)<\infty$ which contradicts Lemma \ref{limit is a cylinder}.

It remains to study subsequences of Lens spaces $L(p_i,q_i)$ such that \[\lim_{i\rightarrow\infty}\text{diameter}(T_{\frac{\pi}{4}}/\mathbb{Z}_{p_i})=0.\] 
Let us prove that  the pre-image of $\Sigma_p$ in $\mathbb{S}^3$, denoted by $\hat{\Sigma}_p$, converges in the Hausdorff sense to $T_{\frac{\pi}{4}}$ as $p\rightarrow \infty$. 
\begin{lemma}\label{injective radius uniform dense}
	\[\lim_{i \rightarrow \infty} d_{H}(\hat{\Sigma}_{p_i}, T_{\frac{\pi}{4}})= 0.\]
\end{lemma}
\begin{proof}
 Without loss of generality, we assume that $\Sigma_p \cap A_p$ is stable, where $A_p=\{x\in L(p,q): r(x)\geq \frac{\pi}{4} \}$ and  $r(x)=r$ if, and only if, $x\in T_r$. Let us define the quantities $a=\liminf_{p\rightarrow \infty}\inf \{r(x): x\in \Sigma_p\}$ and $b=\limsup_{p\rightarrow \infty}\sup \{r(x): x\in \Sigma_p\}$. If $b< \frac{\pi}{2}$, then $T_{b}$ can be obtained as a limit of $\hat{\Sigma}_p$ as $p\rightarrow \infty$ since the curvature of $\Sigma_p\cap \hat{A}_p$ is uniformly bounded and since each orbit of $\mathbb{Z}_{p_i}$ is becoming dense on the Clifford torus that contains it. Thus,  $b=\frac{\pi}{4}$ which implies that $a=\frac{\pi}{4}$ and the lemma is proved in this case. Indeed, if $a<\frac{\pi}{4}$, then $T_{\frac{\pi}{4}}$ would be a stable minimal surface, contradiction.
 Hence, we may assume that $b=\frac{\pi}{2}$. 
 
 First we study the case where  $\Sigma_p\cap T_{\frac{\pi}{2}}= \emptyset$ for every $p$. Let $x_p\in \hat{\Sigma}_p$  be  the closest point to $T_{\frac{\pi}{2}}$. By the stability assumption, the connected components of $\hat{\Sigma}_p$ in $\hat{A}_p$  converge to  leafs of  a minimal lamination $F$ in $\hat{A}_p$. Since $T_{\frac{\pi}{2}}$ is tangent to every such leaf that it intersects, we conclude that $T_{\frac{\pi}{2}}$ is contained in a leaf $F_{\alpha}$. Let $\Gamma_p\subset \mathbb{S}^3$ be a minimal torus  containing the geodesics $T_0$ and $T_{\frac{\pi}{2}}$ and  perpendicular to $\hat{\Sigma}_p$ at $x_p$. 
 The minimal tori $\Gamma_p$  is a leaf of the singular lamination $E=\{E_{\beta}\}$ by the union of minimal tori  containing  $T_0$ and $T_{\frac{\pi}{2}}$. 
 By compactness, $\Gamma_p$ converge to a leaf $E_{\beta}$ perpendicular to  $F_{\alpha}$ along $T_{\frac{\pi}{2}}$. Consequently, there exist another leaf $E_{\beta_1}$ which is tangent to $F_{\alpha}$ along $T_{\frac{\pi}{2}}$. By the analytical continuation, the lamination $F$ coincide with the singular lamination $E$, contradiction. 
 
 Now we study  the case $\Sigma_p\cap T_{\frac{\pi}{2}}\neq \emptyset$. By choosing $x_p \in \Sigma_p\cap T_{\frac{\pi}{2}}$, we have that $(L(p,q),p^2g_0,x_p)\rightarrow(M,\delta,x_{\infty})$, where $M$ is a quotient of $\mathbb{R}^3$ by a screw motion with angle $\theta$ and $(\Sigma_p,p^2g_0,x_p) \rightarrow (\Sigma_{\infty},\delta,x_{\infty})$, where $\Sigma_{\infty}\subset M$ is totally geodesic by Lemma \ref{total curvature inequality}. If $\Sigma_{\infty}$ is a plane, then $\lim_{p \rightarrow \infty}p\,c(\Sigma_p)<\infty$. As this contradicts Lemma \ref{limit is a cylinder} (note that $\text{inj}_xL(p,q)\geq \frac{\pi}{p}$ for every $x$), we conclude that  $\Sigma_{\infty}$ is flat cylinder.
 It is enough to proving that  $\theta\neq 0$,  since there are no totally geodesic cylinders in $M$ in this case. Let $T_{r_p}$ be the Clifford torus such that $\lim_{p\rightarrow \infty}p\, d_{L(p,q)}(T_{r_p}, T_{\frac{\pi}{2}})= c_0$. It follows that $(T_{r_p},p^2\,g_0)$ converges to a tube of radius $c_0$ around the central fiber  in $(M,\delta)$  through $x_{\infty}$. Recall the Hopf fibration  $h: \mathbb{S}^3\rightarrow \mathbb{S}^2(\frac{1}{2})$. If $\gamma_p: [0,1]\rightarrow \mathbb{S}^3$ is the geodesic segment such that  $|\gamma_p|=2\,\text{inj}_{\gamma_p(0)}L(p,q)$ with $\gamma_p(0)\in T_{r_p}$, then $h(\gamma_p)$ is a geodesic in $\mathbb{S}^2(\frac{1}{2})$ whose extremities determine  an arc $\beta_p:[0,1]\rightarrow h(T_{r_p})$. 
 Under the scale $\lambda=p^2$ of the round metric $g_0$, $\beta_p$ converge to an arc $\beta$ in the geodesic circle  of radius $c_0$   centered at the origin in $\mathbb{R}^2$ and $h(\gamma_p)$ converges to a linear segment $\gamma$  whose extremities are those of $\beta$. The angle $\frac{|\beta|}{c_0}$ is independent of the choice of $c_0$. Hence, $|\gamma|$ increases as $c_0$ increases. In particular, the injective radius of $M$ is not constant and, hence, $\theta \neq 0$.  
\end{proof}
 By Lemma \ref{injective radius uniform dense},  there exists $\lambda_p$ such that  $\frac{1}{C} \frac{1}{\lambda_p}\leq \text{inj}_{x}L(p,q)= C\frac{1}{\lambda_p}$ for every $x \in \Sigma_p$. The constant $C>0$ is independent of $p$ and $q$.
As before,  $(L(p,q),\lambda_p^2g_0,x_p)\rightarrow(M,\delta,x_{\infty})$ and  $\Sigma_p\rightarrow \Sigma_{\infty}$, where $\Sigma_{\infty}$ is a totally geodesic surface in $(M,\delta)$  by Lemma \ref{total curvature inequality}. If $M$ is diffeomorphic to  $T^2\times \mathbb{R}$, then $\lim_{p \rightarrow\infty} \lambda_p\,c(\Sigma_p)<\infty$ and we obtain a contradiction   with Lemma \ref{limit is a cylinder}. Similar argument for the case  when $M=\mathbb{R}^3/T_v$ and $\Sigma_{\infty}$ an union of planes. Therefore, we assume, regardless the choices of base points, that $M$ is diffeomorphic to $\mathbb{S}^1\times \mathbb{R}^2$ and  that $\Sigma_{\infty}$ is a totally geodesic $\mathbb{S}^1\times \mathbb{R}$. Let us show that this is incompatible with  the assumption that $\text{genus}(\Sigma_p)>1$. 
\begin{lemma}
	\textit{For each $j$, let  $\Sigma_{j}$ be a closed minimal surface of genus $g$ in $M_j$  and assume that $(M_j,\lambda_j^2g_0,x_j) \rightarrow (\mathbb{S}^1\times \mathbb{R}^2,\delta,x_{\infty})$ and that $\Sigma_j \rightarrow \mathbb{S}^1\times \mathbb{R}$ for every choice of  base points $x_j\in \Sigma_j$. Then $g=1$.}
\end{lemma}
\begin{proof}
It follows from the assumptions, that there exist positive constants $C_1$ and $C_2$ such that $\frac{C_1}{\lambda_j}\leq \text{inj}_x\Sigma_{j}\leq \frac{C_2}{\lambda_j}$ and $\frac{C_1}{\lambda_j}\leq \text{inj}_xM_{j}\leq \frac{C_2}{\lambda_j}$ for every $x \in \Sigma_j$ and every $j$. For each $j$, let $\mathcal{F}_j=\{B_1,\ldots, B_{N_j}\}$ be a maximal disjoint collection of balls  $B_i=B_{\frac{R}{\lambda_j}}(x_{ij})$ in $M_j$ where $x_{ij}\in \Sigma_j$ and $R> 4\,C_2$. By the assumption of the lemma, there exists $j_0$ such that $\Sigma_j\cap B_{\frac{R}{\lambda_j}}(x_{ij})$ is an annular surface for every $j\geq j_0$. For $j$ sufficiently large, let $K_j$ be a connected component of $\Sigma_j- \cup_{i=1}^{N_j}B_i$ and take $y_j \in K_j$. By assumption,   $(\Sigma_j,\lambda_j^2g_0,y_j)\rightarrow (\mathbb{S}^1\times \mathbb{R},\delta, y_{\infty})$. We consider $\mathcal{F}_{\infty}$  the disjoint collection of regions in $\Sigma_{\infty}$ obtained as the limit of $\Sigma_j\cap B_{ij}$. Note that each element of $F_{\infty}$ is the intersection of geodesic balls in $\mathbb{S}^1\times \mathbb{R}^2$ centered on $\Sigma_{\infty}$ and radius $R\in [C_1,C_2]$, hence, an annulus where each boundary component generates $\pi_1(\Sigma_{\infty})$. Moreover, each connected component of $\Sigma_{\infty}-\mathcal{F}_{\infty}$ is compact by the maximality of $\mathcal{F}_j$. Since $K_j$ is connected and $y_{\infty}\notin \mathcal{F}_{\infty}$,  we conclude that $K_{\infty}$ is also an annulus. Hence, there exists an integer $j_2$ such that $\Sigma_j$ is an union of disjoint annulus for every $j\geq j_2$. By the Gauss-Bonnet Theorem, $\text{genus}(\Sigma_j)=1$.
\end{proof}

\begin{flushleft} 
	Case III: The sequence $\{M_{p_i}\}_{i \in \mathbb{N}}$ is such that $H_2^p=\pi_2(\varphi(G_p))$ is  $\mathbb{D}_{2n}$.
\end{flushleft}

The spherical space forms in this case are  double covered by  lens spaces. The arguments  in Case II  apply  \textit{mutatis mutandis}. 
\end{proof}

\end{document}